\def\Z{\mathbb{Z}}
\def\SL{{\rm SL}}
\newcommand{\texpdf}[2]{\texorpdfstring{#1}{#2}}
\renewcommand{\tilde}{\widetilde}
\newtheorem{theorem}{Theorem}[section]
\newtheorem{corollary}[theorem]{Corollary}
\newtheorem{proposition}[theorem]{Proposition}
\newtheorem*{theorem*}{Theorem}
\theoremstyle{remark}
\newtheorem*{remark}{Remark}
\newtheorem*{remarks}{Remarks}
\numberwithin{equation}{section}
\title{Eisenstein series modulo prime powers}
\date{\today}
\author{Scott Ahlgren}
 \address{Department of Mathematics\\
University of Illinois\\
Urbana, IL 61801}
 \email{sahlgren@illinois.edu}
\author{Cruz Castillo}
 \address{Department of Mathematics\\
University of Illinois\\
Urbana, IL 61801}
 \email{ccasti30@illinois.edu}
\author{Clayton Williams}
\address{Department of Mathematics\\
University of Illinois\\
Urbana, IL 61801} 
\email{cw78@illinois.edu} 
\thanks{
Scott Ahlgren
  was partially supported by  grant \#963004 from the Simons Foundation.
  Cruz Castillo was partially supported by the Alfred P. Sloan Foundation’s MPHD Program and by the National Science Foundation Graduate Research Fellowship grant DGE 21-46756.}
\begin{document}

\begin{abstract} If $p\geq 5$ is prime and $k\geq 4$ is an even integer with $(p-1)\nmid k$ we consider the  Eisenstein series $G_k$ on $\SL_2(\Z)$ modulo powers of $p$.
It is classically known that for such $k$ we have $G_k\equiv G_{k'}\pmod p$
if $k\equiv k'\pmod{p-1}$.
Here we obtain a generalization modulo prime powers  $p^m$ by giving an expression for $G_k\pmod{p^m}$ in terms of modular forms of weight at most $mp$.  As an application we extend a recent result of the first author with Hanson, Raum, and Richter by showing that, modulo powers of  $E_{p-1}$, every such Eisenstein series is congruent modulo $p^m$ to a modular form of weight at most $mp$.  We prove a similar result for the normalized Eisenstein series $E_k$ in the case that $(p-1)\mid k$ and $m<p$.

\end{abstract}

\maketitle

\section{Introduction}
For even integers $k\geq 2$, let 
 $B_k$ be the  Bernoulli number and define the weight $k$   Eisenstein series $G_k$ and $E_k$  by 
\begin{gather}
\label{eq:eis_series}
  G_k
:=
  -\frac{B_k}{2k}\,E_k
:=
  -\frac{B_k}{2k} + \sum_{n=1}^\infty \sigma_{k-1}(n)\,q^n,
\end{gather}
where $\sigma_{k-1}(n)$ is the sum of the $(k-1)$-st powers of the divisors of $n$.  For convenience we define $E_0:=1$.  Then $E_k$ is a modular form of weight $k$ on $\SL_2(\Z)$ unless $k=2$, in which case it is quasimodular.  The study of Eisenstein series modulo primes $p\geq 5$ has a long history; see, for example, 
\cite[\S 1]{Serre_padic}, \cite[\S3]{SwD_ladic}. 
We know for example that
\begin{gather}\label{eq:gk_integral}
     G_k \text{ is $p$-integral} \quad \text{if and only if}\quad  (p-1) \nmid k,
 \end{gather}
 and that 
 \begin{gather}
      E_k \equiv 1  \pmod p \qquad  \text{if } k  \equiv 0 \pmod{p-1}.
 \end{gather}
 From the Kummer congruences and properties of the sum-of-divisors function, we also know that 
\begin{gather}\label{eq:gk_modp}
    G_k \equiv G_{k'}  \pmod p \qquad \text{if } k \equiv k' \not\equiv 0 \pmod {p-1}.
\end{gather}
Some of these facts have straightforward generalizations to  prime power modulus; for example we 
have \cite[\S 1]{Serre_padic}
\begin{gather}
      E_k \equiv 1  \pmod {p^m} \qquad  \text{if } k  \equiv 0 \pmod{p^{m-1}(p-1)}.
 \end{gather}
It is also not difficult to show (see Section~\ref{sec:prelim})
that if $(p-1)\nmid k_0$ and $k_0>m$, then
\begin{gather}\label{eq:gk_mod_powersp}
    G_{k_0} \equiv G_{p^{m-1}(p-1)+k_0}  \pmod {p^m}.
\end{gather}

Throughout the paper we let $p\geq 5$ be a fixed prime, and we denote by $M_k$ the space of  modular forms of weight $k$
on $\SL_2(\Z)$ whose Fourier coefficients lie in the ring $\Z_{(p)}$ of $p$-integral rational numbers. We identify $f\in M_k$ with its Fourier expansion $\sum a(n)q^n\in \Z_{(p)}[\![q]\!]$,
and we interpret the congruence $\sum a(n)q^n\equiv \sum b(n)q^n\pmod{p^m}$ coefficient-wise.
The \emph{weight filtration} of a modular form $f$ modulo $p^m$ is defined as 
\begin{gather}\label{eq:weight_filtdef}
    \omega_{p^m}(f):=\inf\{k: \  f\equiv g\pmod{p^m}\quad \text{ for some}\quad  g\in M_k\}.
    \end{gather}
It follows from \eqref{eq:gk_mod_powersp} that every Eisenstein series $G_k$ with $k\geq 4$ and $(p-1)\nmid k$  has
\begin{gather}
    \label{eq:gk_filt_upper_bound}
    \omega_{p^m}\left(G_k\right)\leq m+p^{m-1}(p-1).
\end{gather}
Precise information about the properties of Eisenstein series modulo $p^2$ was obtained in   \cite[Theorem 1.1]{AHRR-Eis-2025}.  In particular, if $k\geq 4$ and $2\leq k_0\leq p-3$ has $k\equiv k_0\pmod{p-1}$, then it was shown that there exists $f_{(p-1)+k_0}\in M_{(p-1)+k_0}$ such that 
\begin{gather}\label{eq:AHRR_result}
    G_k\equiv  f_{(p-1)+k_0}E_{p-1}^n\pmod{p^2},
\end{gather}
where $n=(k-k_0)/(p-1)-1$ (this is trivially true when $4\leq k\leq 2p-4$).
This shows that (up to powers of $E_{p-1}$) every such Eisenstein series is determined mod $p^2$ by a modular form of weight at most $2p-4$.

The  goal of this paper is to obtain  analogues of \eqref{eq:gk_modp} and \eqref{eq:AHRR_result} modulo  arbitrary prime powers.
For example we will show that 
every Eisenstein series $G_k$ with $k\geq 4$ and  $(p-1)\nmid k$ is determined modulo $p^m$ (up to powers of $E_{p-1}$)  by a modular form of weight at most 
$mp$.
We also prove similar statements involving $E_k$ in the case when $(p-1)\mid k$.
To state the analogue of \eqref{eq:gk_modp} we define
\begin{gather}\label{eq:def_Hmkr}
H(m, \alpha, r):=(-1)^{m+1+r}\mbinom{\alpha-1-r}{m-1-r}\mbinom{\alpha}{r}, \qquad 0\leq r\leq m-1.
\end{gather}
\begin{theorem}\label{thm:main_G_k_cong}
Suppose that $p\geq 5$ is prime and  that $m\geq 1$.  
Let $k^*>m$ be an integer with $(p-1)\nmid k^*$.
Then for all $\alpha\geq 0$
 we have
\begin{gather}\label{eq:RHS_G_k_general}
    G_{\alpha(p-1)+k^*}
    \equiv \sum_{r=0}^{m-1}H(m, \alpha, r)G_{r(p-1)+k^*}E_{p-1}^{\alpha-r}\pmod{p^m}.
\end{gather}
\end{theorem}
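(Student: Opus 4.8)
The plan is to reduce \eqref{eq:RHS_G_k_general} to a single interpolation principle and then verify its hypotheses Fourier coefficient by Fourier coefficient. Since $E_{p-1}\equiv 1\pmod p$, the series $E_{p-1}$ is a unit in $\Z_{(p)}[\![q]\!]$ whose reduction modulo $p^m$ is invertible, so multiplying \eqref{eq:RHS_G_k_general} through by $E_{p-1}^{-\alpha}$ shows that the theorem is equivalent to the congruence
\[
a_\alpha\equiv\sum_{r=0}^{m-1}H(m,\alpha,r)\,a_r\pmod{p^m},\qquad a_j:=G_{j(p-1)+k^*}\,E_{p-1}^{-j}\in\Z_{(p)}[\![q]\!].
\]
First I would record the combinatorial meaning of the weights $H(m,\alpha,r)$. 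Using $\binom{\alpha}{j}\binom{j}{r}=\binom{\alpha}{r}\binom{\alpha-r}{j-r}$ together with the alternating partial-sum identity $\sum_{l=0}^{L}(-1)^{l}\binom{N}{l}=(-1)^{L}\binom{N-1}{L}$, one checks that
\[
H(m,\alpha,r)=\sum_{j=r}^{m-1}(-1)^{j-r}\binom{\alpha}{j}\binom{j}{r}.
\]
Hence $\alpha\mapsto H(m,\alpha,r)$ is a polynomial of degree at most $m-1$ with $H(m,\alpha,r)=\delta_{\alpha,r}$ for integers $0\le\alpha\le m-1$; that is, $\{H(m,\cdot,r)\}_{r=0}^{m-1}$ is the Lagrange interpolation basis at the nodes $0,1,\dots,m-1$ (equivalently, $X^{\alpha}\equiv\sum_{r}H(m,\alpha,r)X^{r}\pmod{(X-1)^m}$ in $\Z[X]$).

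Next I would isolate the arithmetic content in one property. Writing $v_p$ for the $p$-adic valuation and $\Delta$ for the forward difference $\Delta c_\alpha=c_{\alpha+1}-c_\alpha$, say that a sequence $(c_\alpha)_{\alpha\ge 0}$ valued in a $\Z_{(p)}$-module has property $(\mathrm P)$ if $v_p(\Delta^{j}c_\alpha)\ge j$ for all $\alpha,j\ge 0$. Two formal facts then drive the proof. First, $(\mathrm P)$ is preserved under $\Z_{(p)}$-linear combinations and, through the discrete Leibniz rule
\[
\Delta^{j}(cd)_\alpha=\sum_{i=0}^{j}\binom{j}{i}(\Delta^{i}c)_\alpha\,(\Delta^{j-i}d)_{\alpha+i},
\]
under products, since the right-hand side has valuation at least $\min_i(i+(j-i))=j$. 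Second, if $(c_\alpha)$ has $(\mathrm P)$ then its Newton expansion $c_\alpha=\sum_{j\ge 0}\binom{\alpha}{j}\Delta^{j}c_0$ may be truncated after the term $j=m-1$ modulo $p^m$, giving $c_\alpha\equiv Q(\alpha)\pmod{p^m}$ for a polynomial $Q$ of degree at most $m-1$ with $Q(r)=c_r$ for $0\le r\le m-1$; the Lagrange property of $H$ then yields $c_\alpha\equiv\sum_{r=0}^{m-1}H(m,\alpha,r)c_r\pmod{p^m}$. Thus the displayed congruence, and with it the theorem, follows once every Fourier coefficient of $a_\alpha$, as a function of $\alpha$, is shown to have property $(\mathrm P)$.

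Because $a_\alpha=G_{k_\alpha}\cdot E_{p-1}^{-\alpha}$ with $k_\alpha:=\alpha(p-1)+k^*$, closure of $(\mathrm P)$ under products and convolution reduces this to the two factors. For $u:=E_{p-1}^{-1}\equiv 1\pmod p$ one has $\Delta^{j}_\alpha[q^{n}]u^{\alpha}=[q^{n}]\big(u^{\alpha}(u-1)^{j}\big)$ and $(u-1)^{j}\equiv 0\pmod{p^{j}}$, so the coefficients of $E_{p-1}^{-\alpha}$ have $(\mathrm P)$. For the positive coefficients of $G_{k_\alpha}$ I would use $\sigma_{k_\alpha-1}(n)=\sum_{d\mid n}d^{\,k^*-1}(d^{p-1})^{\alpha}$, whence $\Delta^{j}_\alpha\sigma_{k_\alpha-1}(n)=\sum_{d\mid n}d^{\,k^*-1}(d^{p-1})^{\alpha}(d^{p-1}-1)^{j}$: here $v_p((d^{p-1}-1)^{j})\ge j$ when $p\nmid d$ by Fermat's little theorem, while the terms with $p\mid d$ carry $d^{\,k^*-1}$ with $k^*-1\ge m>j$. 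The one remaining factor is the constant term $\beta_\alpha:=-B_{k_\alpha}/(2k_\alpha)$, for which $(\mathrm P)$ says $v_p(\Delta^{j}\beta_\alpha)\ge j$. This is exactly the strong (higher-difference) form of the Kummer congruences, namely the $p$-adic analyticity of $k\mapsto\zeta(1-k)$ along the progression $k\equiv k^*\pmod{p-1}$, where $(p-1)\nmid k^*$ keeps the values $p$-integral and $k^*>m$ lets one drop the Euler factor $1-p^{\,k-1}$ modulo $p^m$; I would import it from the preliminary section. Granting it, all coefficients of $a_\alpha$ inherit $(\mathrm P)$, and the second fact of the previous paragraph completes the proof.

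The main obstacle is precisely this last point: establishing $(\mathrm P)$ for the Bernoulli constant term is the only step that is not formal, and it is where the Kummer congruences (in their iterated-difference sharpening, rather than the single congruence \eqref{eq:gk_modp}) genuinely enter. A secondary but real subtlety is the bookkeeping in the convolution defining the coefficients of $G_{k_\alpha}E_{p-1}^{-\alpha}$: one must keep track of the fact that the degree-$j$ part of each factor is divisible by $p^{j}$, so that cross terms of total degree $\ge m$ vanish modulo $p^m$ and the product again reduces to a polynomial of degree $\le m-1$ in $\alpha$; this is what makes the closure of $(\mathrm P)$ under products usable despite the factors $E_{p-1}^{-\alpha}$ being present.
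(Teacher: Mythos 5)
Your reduction is correct and genuinely different from the paper's argument, but it has one real gap, located at exactly the step you yourself flag as ``the main obstacle.'' You propose to obtain property $(\mathrm P)$ for the constant terms $\beta_\alpha=-B_{\alpha(p-1)+k^*}/\bigl(2(\alpha(p-1)+k^*)\bigr)$ by importing it from the preliminary section. The preliminaries record only the classical Kummer congruence \eqref{eq:kummer}: if $k\equiv k'\pmod{p^{r-1}(p-1)}$ then the corresponding values agree modulo $p^r$. In terms of the sequence $\beta_\alpha$ this is a two-point, Lipschitz-type statement, $\nu_p(\beta_\alpha-\beta_{\alpha'})\geq \nu_p(\alpha-\alpha')+1$; it yields $\nu_p(\Delta^1\beta_\alpha)\geq 1$ but nothing like $\nu_p(\Delta^j\beta_\alpha)\geq j$ for $j\geq 2$. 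The iterated-difference property you need is strictly stronger ($p$-adic analyticity rather than continuity: a Lipschitz function of a $p$-adic variable need not be analytic), and it cannot be deduced formally from \eqref{eq:kummer}. This is precisely the hole the paper fills by citing Sun \cite[Corollary~4.1]{Sun-Bernoulli}, whose $p$-regular-function machinery (Proposition~\ref{prop:sun_regular}) exists for exactly this purpose; note that $p$-regularity in Sun's sense is the same as your $(\mathrm P)$ evaluated at $\alpha=0$, and implies it at all $\alpha$. So, as written, the central arithmetic input of your proof is unsupported. It becomes a complete proof once you replace ``import from the preliminary section'' by a citation of Sun's congruence (as the paper does in proving its Proposition~\ref{prop:main_G_k_cong}), or by an argument via the measure-theoretic construction of the Kubota--Leopoldt $p$-adic $L$-function.

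With that input granted, your route is structurally cleaner than the paper's. The paper first proves the congruence without the $E_{p-1}$ factors (Proposition~\ref{prop:main_G_k_cong}) and then reinstates $E_{p-1}^{\alpha-r}$ by binomial expansion, which forces it to prove the combinatorial identity of Proposition~\ref{prop:combin_H} --- a step requiring creative telescoping (the Sigma package) and Chu--Vandermonde. Your trick of multiplying through by $E_{p-1}^{-\alpha}$, so that the right-hand data $a_r$ no longer depend on $\alpha$, combined with closure of the difference property under products (discrete Leibniz), absorbs the $E_{p-1}$ powers formally and avoids Proposition~\ref{prop:combin_H} altogether; in effect you exploit that products of $p$-regular functions are $p$-regular, which the paper itself uses only in its Section~4. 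One small repair is needed, however: property $(\mathrm P)$ as you state it ($\nu_p(\Delta^j c_\alpha)\geq j$ for all $j$) is false for the sequences $\alpha\mapsto\sigma_{\alpha(p-1)+k^*-1}(n)$ when $p\mid n$, since the divisors $d$ with $p\mid d$ contribute terms whose valuation equals $(k^*-1+\alpha(p-1))\nu_p(d)$, which is at least $m$ but does not grow with $j$. Your own estimate ($k^*-1\geq m>j$) covers only $j<m$, yet the Newton-truncation step uses all $j\geq m$. The fix is to weaken $(\mathrm P)$ to $\nu_p(\Delta^j c_\alpha)\geq\min(j,m)$: these sequences do satisfy that, the same Leibniz computation shows it is closed under products, and it still permits truncation modulo $p^m$, so the rest of your argument is unaffected.
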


\begin{remarks}
\begin{enumerate}
\item All terms in \eqref{eq:RHS_G_k_general} (and in \eqref{eq:ek_modp} below)
have the same weight.
    \item 
    Note that  
$H(m, \alpha, r)=
\delta_{r, \alpha}$ for $0\leq\alpha\leq m-1$ (where $\delta$ is the Kronecker delta symbol).
So the statement is trivially true for such $\alpha$.

\item Theorem~\ref{thm:main_G_k_cong} in the case  $m=1$ is equivalent to the congruence \eqref{eq:gk_modp}.
\item When $m=2$ and $k_0\geq 4$, the congruence \eqref{eq:AHRR_result} is implied by 
Theorem~\ref{thm:main_G_k_cong}.
This is not the case when $k_0=2$.
\item Given $k> m$ we can write $k=\alpha(p-1)+k^*$ with $m<k^*\leq m+p-1$ and $\alpha\geq 0$.  With these choices the  weights of the modular forms $G_{r(p-1)+k^*}$ appearing on the right side of $\eqref{eq:RHS_G_k_general}$ are at most $mp$.
\end{enumerate}
\end{remarks}

We obtain a similar result for $E_{k}$ in the case when $(p-1)\mid k$ and $m<p$.
\begin{theorem}\label{thm:main_E_k_cong}
Suppose that $p\geq 5$ is prime, that $1\leq m\leq p-1$, and that  $\alpha\geq 1$.
Then
\begin{gather}\label{eq:ek_modp}
    E_{\alpha(p-1)}\equiv \sum_{r=0}^{m-1}H(m, \alpha, r)E_{r(p-1)}E_{p-1}^{\alpha-r}\pmod{p^m}.
\end{gather}
\end{theorem}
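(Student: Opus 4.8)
The plan is to reduce \eqref{eq:ek_modp} to a clean interpolation identity together with a single $p$-adic divisibility estimate. Since $E_{p-1}\equiv 1\pmod p$, the series $E_{p-1}$ is a unit in $\Z_{(p)}[\![q]\!]$ with $p$-integral inverse, so I may divide the asserted congruence by $E_{p-1}^\alpha$ without losing information modulo $p^m$. Setting
\[
\phi(\alpha):=E_{\alpha(p-1)}\,E_{p-1}^{-\alpha}\in\Z_{(p)}[\![q]\!],\qquad \phi(0)=1,
\]
the claim \eqref{eq:ek_modp} is equivalent to $\phi(\alpha)\equiv\sum_{r=0}^{m-1}H(m,\alpha,r)\,\phi(r)\pmod{p^m}$. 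The role of the definition \eqref{eq:def_Hmkr} is that an elementary computation identifies $H(m,\alpha,r)=\prod_{0\le s\le m-1,\ s\ne r}\frac{\alpha-s}{r-s}$ as the Lagrange interpolation coefficient for the nodes $0,1,\dots,m-1$; hence the right-hand side is the value at $\alpha$ of the unique polynomial of degree at most $m-1$ agreeing with $\phi$ at $r=0,\dots,m-1$. For $0\le\alpha\le m-1$ this already gives equality, recovering the first remark after Theorem~\ref{thm:main_G_k_cong}.

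I would then pass to Newton's forward-difference form. With $\Delta\phi(\alpha):=\phi(\alpha+1)-\phi(\alpha)$, the interpolating polynomial is $\sum_{j=0}^{m-1}\binom{\alpha}{j}\Delta^j\phi(0)$, whereas $\phi$ itself satisfies the exact expansion $\phi(\alpha)=\sum_{j=0}^{\alpha}\binom{\alpha}{j}\Delta^j\phi(0)$. Subtracting,
\[
\phi(\alpha)-\sum_{r=0}^{m-1}H(m,\alpha,r)\,\phi(r)=\sum_{j=m}^{\alpha}\binom{\alpha}{j}\Delta^j\phi(0).
\]
Because each binomial coefficient is $p$-integral, the theorem reduces to the single estimate $\Delta^m\phi\equiv 0\pmod{p^m}$, valid as a congruence of $q$-series for every $\alpha$; indeed this forces $\Delta^j\phi(0)\equiv0\pmod{p^m}$ for all $j\ge m$, killing the error term.

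To prove the estimate I would separate the two factors of $\phi$. For $E_{p-1}^{-\alpha}$ one has the exact identity $\Delta^m\bigl(E_{p-1}^{-\alpha}\bigr)=E_{p-1}^{-\alpha}\bigl(E_{p-1}^{-1}-1\bigr)^m$, and since $E_{p-1}^{-1}-1=(1-E_{p-1})E_{p-1}^{-1}\equiv 0\pmod p$ this is $\equiv 0\pmod{p^m}$ with no restriction on $m$. The genuine work is the interaction with $\alpha\mapsto E_{\alpha(p-1)}$, for which there is no such multiplicative shortcut. Here I would descend to Fourier coefficients: the $n$-th coefficient of $E_{\alpha(p-1)}$ is $-\tfrac{2\alpha(p-1)}{B_{\alpha(p-1)}}\,\sigma_{\alpha(p-1)-1}(n)$, and the von Staudt--Clausen theorem gives $v_p\bigl(B_{\alpha(p-1)}\bigr)=-1$, so the normalizing factor has valuation exactly $1+v_p(\alpha)$. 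One must then show that taking the $m$-th finite difference in $\alpha$ of these coefficients gains $m$ factors of $p$; this is a Kummer-type analysis of $\tfrac{2k}{B_k}$ and of $\sigma_{k-1}(n)$ as the weight $k$ runs through the progression $\equiv 0\pmod{p-1}$.

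The main obstacle is exactly this last step, and it is where the hypothesis $m\le p-1$ is indispensable. In the component $(p-1)\mid k$ the classical Kummer congruences for $B_k/k$ are unavailable, since the quantity is not $p$-integral, so the finite-difference gain must be extracted from von Staudt--Clausen together with the structure of $\sigma_{k-1}(n)$ modulo $p^m$; the restriction $m\le p-1$ keeps the factorials $j!$ for $j\le m-1$ and the auxiliary weights $r(p-1)$ for $r\le m-1$ in the range where each difference contributes a full, uncancelled power of $p$, the clean gain that fails once $m\ge p$. Granting the estimate, the displayed difference vanishes modulo $p^m$, and multiplying back by $E_{p-1}^\alpha$ returns \eqref{eq:ek_modp}.
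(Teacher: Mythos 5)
Your reduction framework is sound, and it is essentially a repackaging of the machinery the paper itself uses: your identification of $H(m,\alpha,r)$ with the Lagrange coefficients for the nodes $0,1,\dots,m-1$, together with the Newton forward-difference remainder formula, is exactly Sun's identity \eqref{eq:H_inversion}, and your idea of folding $E_{p-1}^{-\alpha}$ into the interpolated function $\phi$ is an attractive structural move (it would bypass the separate combinatorial identity of Proposition~\ref{prop:combin_H} that the paper needs to reinstate the powers of $E_{p-1}$). Your treatment of that factor, via $\Delta^m(E_{p-1}^{-\alpha})=E_{p-1}^{-\alpha}(E_{p-1}^{-1}-1)^m\equiv 0\pmod{p^m}$, is correct.

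The genuine gap is that the one estimate carrying all the arithmetic content is never proved: you write ``one must then show'' and later ``granting the estimate'' precisely at the claim that the finite differences in $\alpha$ of $d^{\alpha(p-1)}\,\alpha/B_{\alpha(p-1)}$ (equivalently, of the Fourier coefficients of $E_{\alpha(p-1)}$) gain a power of $p$ per difference. This is the content of the paper's Proposition~\ref{prop:E_p-1_Bernoulli_cong}, whose proof occupies most of Section~\ref{sec:proof_thm_1.2} and cannot be waved through: by Sun's congruence \eqref{eq:sun97cong}, the natural function $f(k)=(p-p^{k(p-1)})B_{k(p-1)}$ is \emph{not} $p$-regular --- its $n$-th differences are $\equiv p^{n-1}\not\equiv 0\pmod{p^n}$ whenever $(p-1)\mid n$ --- so your heuristic that ``each difference contributes a full, uncancelled power of $p$'' is false for the underlying Bernoulli quantity. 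The paper must correct $f$ by an auxiliary function $g$ supported on differences of order at least $p-1$, show that reciprocals of $p$-regular functions are $p$-regular (the $x^{\phi(p^n)-1}$ trick), and then use that the correction is invisible modulo $p^{p-1}$; this approximation step is the true source of the hypothesis $m\leq p-1$, not the size of the factorials $j!$ as you suggest. A second, related omission: to deduce $\Delta^m\phi\equiv 0\pmod{p^m}$ from statements about the two factors of $\phi$ you need a discrete Leibniz rule, which requires the full graded tower $\Delta^k\equiv 0\pmod{p^k}$ for every $k\leq m$ for the Bernoulli factor (again exactly Proposition~\ref{prop:E_p-1_Bernoulli_cong}, or Sun's closure of $p$-regular functions under products), not merely a single top-order estimate; your write-up never explains how the factors recombine. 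As it stands, the proposal establishes the easy reductions but defers the theorem's actual difficulty.
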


The \emph{factor filtration} of a modular form modulo $p^m$ was introduced in \cite{ARR_theta}; this is a refinement of the weight  
filtration \eqref{eq:weight_filtdef} whose properties were crucial in determining large parts of the theta-cycle of  modular forms modulo $p^2$.
As an application of the results above we  give strong upper bounds for the  factor filtrations of Eisenstein series modulo any prime power.

For $m\geq 1$  let $\mathscr M_{m}\subseteq (\Z/p^m \Z)[\![q]\!]$ be the set of reductions of all elements of all  $M_k$.
We define the  modulo $p^m$ \emph{factor filtration} of $\overline f\in\mathscr{M}_{m}$ by 
    \begin{gather*}
    \tilde\omega_{p^m}(\overline f):=\inf\{k:\overline f\equiv g E_{p-1}^n\pmod{p^m}\text{ for some }n\geq 0\text{ and some }g\in M_k\}.
    \end{gather*}
By a slight  abuse of notation we write $\tilde{\omega}_{p^m}(f) = \tilde\omega_{p^m}(\overline f)$ 
when $f\in \Z_{(p)}[\![q]\!]$ has $\overline f\in \mathcal M_m$.
 
We will use the following notation: given $m\geq 1$ and a weight $k\geq 4$ we  define 
\begin{gather}\label{eq:def_k0}
\begin{aligned}
    k_0:=& \  \text{the least non-negative residue of $k\pmod{p-1}$, }\\
    k_0(m):=& \  \text{the smallest integer greater than $m$ and congruent to $k\!\!\pmod{p-1}$.}
\end{aligned}
\end{gather}
Then \eqref{eq:AHRR_result} is equivalent to the statement that for 
 $k\geq 4$ and $(p-1)\nmid k$
 we have
\begin{gather}\label{eq:AHRR_statement}
    \tilde\omega_{p^2}\left(G_k\right)\leq (p-1)+k_0.
\end{gather}

As a corollary of Theorem~\ref{thm:main_G_k_cong} we obtain an  analogous result modulo prime powers.   
\begin{corollary}
    \label{cor:main_factor_filt}
    Let $p\geq 5$ be  prime, let $m\geq 1$, and  let $k\geq 4$ have $(p-1)\nmid k$.
    Then
\begin{gather*}
    \tilde \omega_{p^m}\left(G_{k}\right)\leq(m-1)(p-1)+k_0(m).
\end{gather*}
\end{corollary}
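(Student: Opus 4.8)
The plan is to apply Theorem~\ref{thm:main_G_k_cong} with the specific choice $k^*=k_0(m)$ and then extract a common power of $E_{p-1}$ from the resulting congruence. Write $w:=(m-1)(p-1)+k_0(m)$ for the target weight. Since $k_0(m)>m$ by definition and $k_0(m)\equiv k\not\equiv 0\pmod{p-1}$, the weight $k^*=k_0(m)$ satisfies the hypotheses of Theorem~\ref{thm:main_G_k_cong}, namely $k^*>m$ and $(p-1)\nmid k^*$. Note also that $G_j$ is $p$-integral whenever $(p-1)\nmid j$, so every $G_{r(p-1)+k^*}$ appearing below lies in $M_{r(p-1)+k^*}$.

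First I would dispose of the degenerate range $k<w$. Here $G_k\in M_k$ itself (using $(p-1)\nmid k$), so taking $n=0$ and $g=G_k$ in the definition of the factor filtration gives $\tilde\omega_{p^m}(G_k)\leq k<w$, which is the desired bound. For the main range $k\geq w$, set $\alpha:=(k-k^*)/(p-1)$, which is a non-negative integer since $k\equiv k^*\pmod{p-1}$ and $k\geq w\geq k^*$. Because $k\geq w=(m-1)(p-1)+k^*$ we in fact have $\alpha\geq m-1$, so Theorem~\ref{thm:main_G_k_cong} applies and yields
\[
    G_k\equiv\sum_{r=0}^{m-1}H(m,\alpha,r)\,G_{r(p-1)+k^*}\,E_{p-1}^{\alpha-r}\pmod{p^m}.
\]
Since $\alpha-r\geq \alpha-(m-1)\geq 0$ for every $r$ in the sum, I would factor out the minimal power $E_{p-1}^{\,\alpha-(m-1)}$ to obtain
\[
    G_k\equiv E_{p-1}^{\,\alpha-(m-1)}\sum_{r=0}^{m-1}H(m,\alpha,r)\,G_{r(p-1)+k^*}\,E_{p-1}^{\,(m-1)-r}\pmod{p^m}.
\]

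The crucial observation is that the inner sum is \emph{weight-homogeneous}: the $r$-th summand has weight $(r(p-1)+k^*)+((m-1)-r)(p-1)=(m-1)(p-1)+k^*=w$, independent of $r$. Hence the inner sum is a single modular form $g\in M_w$, with $p$-integral coefficients because the $H(m,\alpha,r)$ are integers and each $G_{r(p-1)+k^*}$ and $E_{p-1}$ is $p$-integral. Taking $n:=\alpha-(m-1)\geq 0$ gives $G_k\equiv E_{p-1}^n g\pmod{p^m}$ with $g\in M_w$, whence $\tilde\omega_{p^m}(G_k)\leq w=(m-1)(p-1)+k_0(m)$.

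There is no serious obstacle here: the corollary is a fairly direct consequence of Theorem~\ref{thm:main_G_k_cong}. The one step requiring genuine insight is the weight-homogeneity observation — that after factoring the minimal power $E_{p-1}^{\,\alpha-(m-1)}$ the remaining sum has every term of the same weight $w$, so that it is an honest modular form of weight $w$ rather than a mere power series; this is precisely what allows a single power of $E_{p-1}$ to be pulled out of the entire congruence. The only additional care needed is to guarantee that the extracted exponent $n=\alpha-(m-1)$ is non-negative, which is exactly why I split the argument into the ranges $k\geq w$ and $k<w$ (the latter, together with the boundary case $\alpha=m-1$ where $n=0$, being handled trivially by using $G_k$ directly).
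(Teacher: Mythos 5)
Your proposal is correct and takes essentially the same route as the paper's own proof: apply Theorem~\ref{thm:main_G_k_cong} with $k^*=k_0(m)$ (after disposing of the trivial range of small $k$), pull out the minimal power $E_{p-1}^{\alpha-m+1}$, and recognize the remaining sum as a single modular form of weight $(m-1)(p-1)+k_0(m)$. The paper states this more tersely---it simply asserts that Theorem~\ref{thm:main_G_k_cong} produces $g\in M_{(m-1)(p-1)+k_0(m)}$ with $G_k\equiv E_{p-1}^{\alpha-m+1}g\pmod{p^m}$---whereas you make explicit the weight-homogeneity observation that justifies this, but the argument is the same.
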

\begin{remarks}
 \begin{enumerate}
     \item 
    When $m=2$ and  $k_0\geq 4$ this result implies \eqref{eq:AHRR_statement} (it does not imply \eqref{eq:AHRR_statement} in the case $k_0=2$).
    \item We have $k_0(m)\leq m+p-1$, so in all cases we have $\tilde\omega_{p^m}\left(G_k\right)\leq mp$.
    \end{enumerate}   
\end{remarks}

The bound in Corollary~\ref{cor:main_factor_filt} is often sharp, as can be computed in Mathematica \cite{wolfram}. For one example, let $p=7$, $m=8$, and $k = 337(p-1)+4=2026$. Then $k_0(m) = 10$ and $(m-1)(p-1)+k_0(m)=52$. Letting $\Delta$ denote the normalized cusp form of weight $12$, a
computation shows that 
\begin{gather*}
    G_k\equiv  f_1 E_{6}^{329}\pmod{7^8},
\end{gather*}
where 
\begin{multline*}
    f_1=  289118 E_4^{13} + 3330770 E_4^{10}\Delta +
 1615995 E_4^7 \Delta^2 + 4467661 E_4^4 \Delta^3 + 
 1172952  E_4\Delta^4\in M_{52}.
\end{multline*}
However, we find that there is no modular form $f_1'\in M_{46}$
with $f_1\equiv f_1'E_{6}\pmod{7^8}$. So the result is sharp in this case.

On the other hand, for  particular values of $m$ it is possible to give a  precise version of Corollary~\ref{cor:main_factor_filt} with improved bounds in many cases (although the complexity of the statement increases quickly with $m$).  We will give a complete treatment of the cases $m=3$  and $m=4$ in Section~\ref{sec_m=3_4}.
For example, we will show that 
if $k_0\geq 4$ then we have
\begin{align*}
    \tilde \omega_{p^3}\left(G_{\alpha(p-1)+k_0}\right)\leq\begin{cases}
        (p-1)+k_0, &\text{if} \ 
        \alpha\equiv 0,1\pmod{p};\\
        2(p-1)+k_0, &\text{otherwise}.
    \end{cases}
\end{align*}

We also consider  the case when $k\equiv 0\pmod{p-1}$.
Here computations suggest that the analogue of Corollary~\ref{cor:main_factor_filt} is true; in other words if $(p-1)\mid k$ (i.e., $k_0=0$) then we have 
\begin{gather}\label{eq:general_powers_speculation}
    \tilde \omega_{p^m}\left(E_{k}\right)\leq(m-1)(p-1)+k_0(m).
\end{gather}
This statement would follow from  an unproved congruence involving Bernoulli numbers which is discussed in Section~\ref{sec:speculation}.  
As a corollary to Theorem~\ref{thm:main_E_k_cong} we obtain a stronger result for small~$m$. 
\begin{corollary}
    \label{cor:E_beta (p-1)_fact_filt}
Suppose that  $k\in\Z_{\geq 0}$ has $k\equiv 0\pmod{p-1}$ and that $1\leq m\leq p-1$. Then
    \begin{gather}
\tilde\omega_{p^m}\left(E_k\right) \leq (m-1)(p-1).
    \end{gather}
\end{corollary}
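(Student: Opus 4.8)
The plan is to derive the corollary directly from Theorem~\ref{thm:main_E_k_cong} by factoring out a suitable power of $E_{p-1}$. Since $(p-1)\mid k$ and $k\geq 0$, I would write $k=\alpha(p-1)$ with $\alpha\geq 0$. If $\alpha\leq m-1$ then $k=\alpha(p-1)\leq(m-1)(p-1)$, and $E_k$ is itself a modular form of weight $k$ with $p$-integral Fourier coefficients (with the convention $E_0=1\in M_0$); hence $\tilde\omega_{p^m}(E_k)\leq k\leq(m-1)(p-1)$ and there is nothing to prove. So I would assume $\alpha\geq m$ from now on.

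For such $\alpha$, Theorem~\ref{thm:main_E_k_cong} applies and gives
\begin{gather*}
    E_{\alpha(p-1)}\equiv\sum_{r=0}^{m-1}H(m,\alpha,r)\,E_{r(p-1)}E_{p-1}^{\alpha-r}\pmod{p^m}.
\end{gather*}
The key point is that the exponent $\alpha-r$ of $E_{p-1}$ is smallest at $r=m-1$, where it equals $\alpha-m+1\geq 1$. I would therefore pull the common factor $E_{p-1}^{\alpha-m+1}$ out of every term to obtain
\begin{gather*}
    E_{\alpha(p-1)}\equiv E_{p-1}^{\alpha-m+1}\sum_{r=0}^{m-1}H(m,\alpha,r)\,E_{r(p-1)}E_{p-1}^{m-1-r}\pmod{p^m},
\end{gather*}
in which each remaining exponent $m-1-r$ is nonnegative.

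It then remains to identify the cofactor $g:=\sum_{r=0}^{m-1}H(m,\alpha,r)E_{r(p-1)}E_{p-1}^{m-1-r}$ as an element of $M_{(m-1)(p-1)}$. Each summand $E_{r(p-1)}E_{p-1}^{m-1-r}$ is a modular form of weight $r(p-1)+(m-1-r)(p-1)=(m-1)(p-1)$, so all terms share the single weight $(m-1)(p-1)$; moreover, for $\alpha\geq m$ and $0\leq r\leq m-1$ each coefficient $H(m,\alpha,r)=(-1)^{m+1+r}\binom{\alpha-1-r}{m-1-r}\binom{\alpha}{r}$ is a genuine integer, so $g$ has $p$-integral Fourier coefficients and lies in $M_{(m-1)(p-1)}$. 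Taking $n=\alpha-m+1\geq 0$ in the definition of the factor filtration then yields $\tilde\omega_{p^m}(E_k)\leq(m-1)(p-1)$. The only thing to check beyond this is precisely that every term on the right-hand side carries the same weight and that the coefficients $H(m,\alpha,r)$ are $p$-integral; there is no genuine obstacle, since all the analytic content is already packaged in Theorem~\ref{thm:main_E_k_cong}.
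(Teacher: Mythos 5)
Your proposal is correct and follows exactly the route the paper intends: the paper's own proof of this corollary is simply ``This follows immediately from Theorem~\ref{thm:main_E_k_cong},'' and your argument—splitting off the trivial case $\alpha\leq m-1$, then factoring $E_{p-1}^{\alpha-m+1}$ out of the congruence so that the cofactor lies in $M_{(m-1)(p-1)}$—is precisely the intended elaboration of that one-line deduction.
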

\begin{remark}
    When $m<p$ and $k_0=0$  we have $k_0(m)=p-1$, so the bound in Corollary~\ref{cor:E_beta (p-1)_fact_filt} is stronger than \eqref{eq:general_powers_speculation} in this case.
\end{remark}
This result is also  sharp in general. For an example, let $p=17$,  $k=81(p-1)=1296$, and $m=6$.
A computation shows that 
\begin{gather}
    E_k\equiv f_2E_{16}^{76}\pmod{17^6},
\end{gather}
where
\begin{multline}
    f_2=E_4^{20}+17835578 E_4^{17}\Delta+1427399 E_4^{14}\Delta^2+23585491
E_4^{11}\Delta^3+19629555E_4^{8}\Delta^4\\
+23614096
E_4^{5}\Delta^5+44217E_4^{2}\Delta^6\in M_{80}.
\end{multline}
It can be checked that there is no $f_2'\in M_{64}$ with $f_2\equiv f_2'E_{16}\pmod {17^6}$.

To prove the results in the case $(p-1)\nmid k$ we begin with a congruence involving Bernoulli numbers due to Sun \cite{Sun-Bernoulli} which implies that the constant terms in \eqref{eq:RHS_G_k_general} agree modulo $p^m$.  In Section~\ref{sec:proof_thm_1.1} we show that this  extends first to a congruence involving Eisenstein series of different weights and finally to the statement of Theorem~\ref{thm:main_G_k_cong}.  To prove this we use a multi-parameter combinatorial identity  which is proved in Proposition~\ref{prop:combin_H}.
In Section~\ref{sec:proof_thm_1.2} we begin by proving a crucial Bernoulli number congruence (Proposition~\ref{prop:E_p-1_Bernoulli_cong}) and then use arguments as in  Section~\ref{sec:proof_thm_1.1} to prove Theorem~\ref{thm:main_E_k_cong}.   
In Section~\ref{sec_m=3_4} we give precise statements in the case when $m=3$ or $4$, and in the last section we discuss an analogue of Theorem~\ref{thm:main_E_k_cong} for arbitrary $m$. 

\subsection*{Acknowledgments}We thank Carsten Schneider for   helpful advice
 regarding the use of his software package Sigma in the proof of Proposition~\ref{prop:combin_H}.
We are also grateful to the referees for their helpful comments.

\section{Preliminaries}\label{sec:prelim}
We recall some  facts about Bernoulli numbers which can be found for example    in \cite[\S 9.5]{CohNT07}. Let $p\geq 5$ be prime, let $k,k',$ and $r$ be positive integers with $k,k'$ even, and let $\nu_{p}$ denote the $p$-adic valuation. The  Clausen-von Staudt theorem states that 
\begin{gather}\label{eq:CVSthm}
    B_k \equiv - \sum_{\substack{q\text{ prime}\\(q-1)|k}} \frac 1 q\pmod 1,
\end{gather}
which gives 
\begin{gather}\label{eq:Bvalp-1}
      \nu_p\left(\frac{B_k}{k}\right) = -\nu_p(k)-1 \quad \text{and} \quad pB_k \equiv -1 \pmod{p} \quad  \text{if }(p-1)|k.
\end{gather}
On the other hand, we have  
\begin{gather}\label{eq:Bvalnotp-1}
    \nu_p\left(\frac{B_k}{k}\right) \geq 0 \quad \text{for } (p-1)\nmid k
\end{gather}
(note that  \eqref{eq:gk_integral} follows from these facts).
 The Kummer congruences imply that if $(p-1)\nmid k$ and $k\equiv k' \pmod{p^{r-1}(p-1)},$ then 
\begin{gather}\label{eq:kummer}
    (1-p^{k-1})\frac{B_k}{k} \equiv (1-p^{k'-1})\frac{B_{k'}}{k'} \pmod{p^r}.
\end{gather}

These congruences imply the claim   \eqref{eq:gk_mod_powersp};
  when $k= k_0 +{p^{m-1}(p-1)}$ and $k_0> m$, it follows from \eqref{eq:kummer} that the 
  constant terms of $G_{k_0}$ and $G_{k}$ are congruent modulo $p^m$.  By Euler's theorem we have $\sigma_{k_0-1}(n) \equiv \sigma_{k-1}(n) \pmod{p^m}$, which shows that  the non-constant terms are also congruent.

In the papers \cite{Sun97,Sun-Bernoulli}, Sun proved a number of congruences for Bernoulli polynomials modulo prime powers.  Recall the definition \eqref{eq:def_Hmkr} of $H(m, \alpha, r)$. 
By \cite[Lemma 2.1]{Sun97} we have the following for any function $f$:
\begin{gather}\label{eq:H_inversion}
f(\alpha)=\sum_{r=0}^{n-1}H(n, \alpha, r)f(r)+
\sum_{r=n}^\alpha\mbinom{\alpha}{r}(-1)^r\sum_{s=0}^r\mbinom{r}{s}(-1)^sf(s).    
\end{gather}
Let $p$ be a prime and $f: \Z_{\geq0}\to \Z_{(p)}$ be a function.  Following \cite{Sun-Bernoulli}, we call $f$ \emph{$p$-regular} if 
\begin{gather}
    \label{eq:def_p_regular}
    \sum_{k=0}^n\mbinom{n}{k}(-1)^k f(k)\equiv 0\pmod{p^n}\quad \text{for all}\quad n\in \Z_{>0}.
\end{gather}
We will need the following facts from \cite[\S 2]{Sun-Bernoulli}:
\begin{proposition}
    \label{prop:sun_regular}
Let $p$ be a prime.
    \begin{enumerate}
        \item The product of $p$-regular functions is $p$-regular.
        \item If $f$ is $p$-regular then for all $\alpha\geq 1$ and $m\geq 1$ we have 
        \begin{gather}
            f(\alpha)=\sum_{r=0}^{m-1}H(m, \alpha, r)f(r)\pmod{p^m}. 
        \end{gather}
    \end{enumerate}
\end{proposition}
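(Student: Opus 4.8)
The plan is to translate the defining condition \eqref{eq:def_p_regular} into the language of finite differences and then treat the two parts separately. Introduce the forward difference operator $\Delta$ by $(\Delta f)(x):=f(x+1)-f(x)$ (this $\Delta$ is unrelated to the modular discriminant, which does not appear in this proof). Since $(\Delta^n f)(0)=\sum_{k=0}^n\binom nk(-1)^{n-k}f(k)=(-1)^n\sum_{k=0}^n\binom nk(-1)^k f(k)$, the $p$-regularity of $f$ is exactly the condition $p^n\mid(\Delta^n f)(0)$ for every $n\geq 1$. Before doing anything else I would record the following strengthening: if $f$ is $p$-regular, then in fact $p^n\mid(\Delta^n f)(x)$ for \emph{every} $x\geq 0$ and every $n\geq 0$. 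This follows from the Newton forward difference expansion $(\Delta^n f)(x)=\sum_{j\geq 0}\binom xj(\Delta^{n+j}f)(0)$, which for $x\in\Z_{\geq 0}$ is a finite sum; each term is an integer multiple of $(\Delta^{n+j}f)(0)$, which is divisible by $p^{n+j}$ and hence by $p^n$.

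For part (1), given $p$-regular functions $f$ and $g$ I would apply the discrete Leibniz rule
\[
(\Delta^n(fg))(0)=\sum_{j=0}^n\binom nj(\Delta^j f)(0)\,(\Delta^{n-j}g)(j).
\]
By the strengthening above, $(\Delta^j f)(0)$ is divisible by $p^j$ and $(\Delta^{n-j}g)(j)$ is divisible by $p^{n-j}$, so every summand is divisible by $p^n$. Hence $p^n\mid(\Delta^n(fg))(0)$ for all $n\geq 1$, which is precisely the $p$-regularity of $fg$.

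Part (2) is then immediate from the inversion formula \eqref{eq:H_inversion}. Taking $n=m$ there, the first sum is exactly $\sum_{r=0}^{m-1}H(m,\alpha,r)f(r)$, and the remainder is
\[
\sum_{r=m}^\alpha\binom\alpha r(-1)^r\sum_{s=0}^r\binom rs(-1)^s f(s).
\]
For each $r\geq m$ the inner sum is the quantity appearing in \eqref{eq:def_p_regular}, so it is divisible by $p^r$ and therefore by $p^m$; since $\binom\alpha r(-1)^r\in\Z$, the entire remainder vanishes modulo $p^m$, yielding the claimed congruence $f(\alpha)\equiv\sum_{r=0}^{m-1}H(m,\alpha,r)f(r)\pmod{p^m}$.

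I expect the one genuinely nontrivial input to be the strengthening used in part (1). The definition \eqref{eq:def_p_regular} controls only the finite differences based at $0$, whereas the Leibniz rule forces evaluation of the differences of $g$ at the shifted points $j$; establishing that $p$-regularity propagates to all base points, via the Newton expansion above, is the crux, after which both parts reduce to bookkeeping of $p$-adic valuations. Part (2), by contrast, uses only the base-point condition directly and requires none of this machinery, so the main work is concentrated in securing the propagation lemma and then invoking the Leibniz rule with correctly matched valuations.
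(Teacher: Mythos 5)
Your proof is correct. One important point of comparison: the paper gives no proof of this proposition at all --- it is imported wholesale from Section~2 of Sun's paper \cite{Sun-Bernoulli} --- so your argument is being measured against Sun's original treatment rather than anything internal to the paper. Part~(2) is handled the same way in both places: take $n=m$ in \eqref{eq:H_inversion} and note that each inner sum $\sum_{s=0}^r\binom{r}{s}(-1)^sf(s)$ with $r\geq m$ is divisible by $p^r$, hence by $p^m$; as you say, this needs only the base-point condition. For part~(1), your route is sound, and the two places where it could go wrong both check out: the discrete Leibniz rule
\[
\left(\Delta^n(fg)\right)(0)=\sum_{j=0}^n\binom{n}{j}\left(\Delta^jf\right)(0)\,\left(\Delta^{n-j}g\right)(j)
\]
is the correct form (induction on $n$ using $\Delta(uv)(x)=(\Delta u)(x)\,v(x+1)+u(x)\,(\Delta v)(x)$), and your propagation lemma --- $p$-regularity at base point $0$ forces $p^n\mid(\Delta^nf)(x)$ for all $x\geq 0$, via the Newton expansion $(\Delta^nf)(x)=\sum_{j\geq0}\binom{x}{j}(\Delta^{n+j}f)(0)$ --- is exactly what is needed to control the shifted evaluation point $j$ in the second factor. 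You are right to identify that lemma as the crux; a naive application of Leibniz with differences based only at $0$ would not close. Sun's own proof of the product statement runs essentially the same mechanism in different packaging: both factors are expanded by Newton's formula, $f(k)=\sum_i\binom{k}{i}(\Delta^if)(0)$ and likewise for $g$, and one then uses that $\sum_k\binom{n}{k}(-1)^k\binom{k}{i}\binom{k}{j}$ vanishes unless $i+j\geq n$, so every surviving term has $p$-valuation at least $n$. The two arguments are equivalent in content; yours has the merit of being self-contained given only \eqref{eq:H_inversion}, which the paper likewise quotes from Sun without proof.
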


\section{Proof of Theorem~\ref{thm:main_G_k_cong} and Corollary~\ref{cor:main_factor_filt}}
\label{sec:proof_thm_1.1}
We begin by proving a congruence involving modular forms of different weights.
\begin{proposition}
\label{prop:main_G_k_cong}
    Suppose that $p\geq 5$ is prime and  that $m\geq 1$.  
Let $k^*>m$ be an integer with $(p-1)\nmid k^*$.
Then for all $\alpha\geq 0$
 we have
\begin{gather*}
    G_{\alpha(p-1)+k^*}
    \equiv \sum_{r=0}^{m-1}H(m, \alpha, r)G_{r(p-1)+k^*
    }\pmod{p^m}.
\end{gather*}

\end{proposition}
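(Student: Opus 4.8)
The plan is to prove the congruence one Fourier coefficient at a time and to recognize the right-hand side as an instance of the inversion formula in Proposition~\ref{prop:sun_regular}(2). Fix $\alpha\ge 1$; the cases $0\le\alpha\le m-1$ are trivial, since there $H(m,\alpha,r)=\delta_{r,\alpha}$ (in particular the $\alpha=0$ case reduces both sides to $G_{k^*}$). For $n\ge 0$ let $c_n(\alpha)$ denote the coefficient of $q^n$ in $G_{\alpha(p-1)+k^*}$, so that the assertion is exactly
\[
c_n(\alpha)\equiv\sum_{r=0}^{m-1}H(m,\alpha,r)\,c_n(r)\pmod{p^m}\qquad(n\ge 0).
\]
By Proposition~\ref{prop:sun_regular}(2) this holds as soon as $\alpha\mapsto c_n(\alpha)$ is $p$-regular, so it suffices to establish $p$-regularity of each coefficient function, with a small adjustment for the divisors of $n$ divisible by $p$.

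For $n\ge 1$ we have $c_n(\alpha)=\sigma_{\alpha(p-1)+k^*-1}(n)=\sum_{d\mid n}d^{\,\alpha(p-1)+k^*-1}$, and I would split the divisor sum according to whether $p\mid d$. If $p\nmid d$, then $d^{p-1}\equiv 1\pmod p$ by Fermat, so for every $n'\ge 1$
\[
\sum_{k=0}^{n'}\binom{n'}{k}(-1)^k d^{\,k(p-1)+k^*-1}
= d^{\,k^*-1}\bigl(1-d^{\,p-1}\bigr)^{n'}\equiv 0\pmod{p^{\,n'}};
\]
hence $\alpha\mapsto d^{\,\alpha(p-1)+k^*-1}$ is $p$-regular, and since the defining condition \eqref{eq:def_p_regular} is linear, the partial sum over the $d\mid n$ with $p\nmid d$ is again $p$-regular and Proposition~\ref{prop:sun_regular}(2) applies to it. For the divisors with $p\mid d$ the hypothesis $k^*>m$ is essential: then $\alpha(p-1)+k^*-1\ge k^*-1\ge m$, so $d^{\,\alpha(p-1)+k^*-1}\equiv 0\pmod{p^m}$ for every $\alpha\ge 0$, whence $c_n^{(1)}(\alpha):=\sum_{d\mid n,\,p\mid d}d^{\,\alpha(p-1)+k^*-1}$ and each $c_n^{(1)}(r)$ vanish modulo $p^m$ and the displayed congruence holds trivially for this part. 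Adding the two parts yields the congruence for every $n\ge 1$.

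The remaining, and genuinely hard, case is the constant term $n=0$, where $c_0(\alpha)=-B_{\alpha(p-1)+k^*}/\bigl(2(\alpha(p-1)+k^*)\bigr)$. Here I would invoke the Bernoulli congruence of Sun~\cite{Sun-Bernoulli}, which asserts precisely that this function of $\alpha$ is $p$-regular; the Euler factor in the Kummer congruence \eqref{eq:kummer} is harmless, since $\alpha(p-1)+k^*-1\ge m$ forces $1-p^{\,\alpha(p-1)+k^*-1}\equiv 1\pmod{p^m}$, so the Kummer-corrected Bernoulli function and $c_0$ agree modulo $p^m$. Proposition~\ref{prop:sun_regular}(2) then settles the $n=0$ case, and assembling the congruences for all $n\ge 0$ gives the claimed congruence of $q$-series modulo $p^m$. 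I expect the main obstacle to be exactly this constant-term step: the divisor-sum coefficients are controlled by the elementary identity $\sum_k\binom{n'}{k}(-1)^k x^k=(1-x)^{n'}$ together with the vanishing forced by $k^*>m$, whereas the arithmetic of the Bernoulli numbers requires the full strength of Sun's $p$-regularity result.
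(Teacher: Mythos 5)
Your proposal is correct and takes essentially the same route as the paper: both argue coefficient-by-coefficient, handle the divisor sums via $p$-regularity of $\alpha\mapsto d^{\alpha(p-1)}$ for $p\nmid d$ together with Proposition~\ref{prop:sun_regular}(2) (the terms with $p\mid d$ being killed modulo $p^m$ by $k^*>m$), and settle the constant term by appealing to Sun's Bernoulli congruence from \cite{Sun-Bernoulli}. The only cosmetic differences are that the paper factors out $d^{k^*-1}$ to reduce to the congruence for $d^{\alpha(p-1)}$, and cites Sun's Corollary~4.1 directly for the constant terms rather than rederiving it from $p$-regularity plus your (correct) observation that the Euler factors are trivial modulo $p^m$ when $k^*>m$.
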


\begin{proof}[Proof of Proposition~\ref{prop:main_G_k_cong}]
Since $k^*>m$, the congruence of the constant terms  follows from \cite[Corollary~4.1]
{Sun-Bernoulli}.  
 To prove that the non-constant terms agree, it is enough to show that 
\begin{gather*}
    \sigma_{\alpha(p-1)+k^*-1}(n)
    \equiv \sum_{r=0}^{m-1}H(m, \alpha, r)\sigma_{r(p-1)+k^*-1}(n)\pmod{p^m}\quad \text{for all $n\geq 1$}.
\end{gather*}
Since $k^*>m$ it is enough to prove that for $p\nmid d$ we have
\begin{gather}\label{eq:divisor_show}
    d^{\alpha(p-1)}
    \equiv \sum_{r=0}^{m-1}H(m, \alpha, r)d^{r(p-1)}\pmod{p^m}.
\end{gather}
Since
\begin{gather*}
    (1-d^{p-1})^n=\sum_{k=0}^n\mbinom{n}{k}(-1)^k d^{k(p-1)},
\end{gather*}
we see that the function $k
\mapsto d^{k(p-1)}$ is  
 $p$-regular if $p\nmid d$.
 Then  \eqref{eq:divisor_show} follows from Proposition~\ref{prop:sun_regular},  
and the proposition is proved.
\end{proof}

\begin{proof}[Proof of Theorem~\ref{thm:main_G_k_cong}]
Write $E_{p-1}=1+pE$ and expand
\begin{gather*}
    E_{p-1}^{\alpha-r}\equiv \sum _{j=0}^{m-1}\mbinom{\alpha-r}{j}p^j E^j\pmod{p^m}.
\end{gather*}
The right side of 
\eqref{eq:RHS_G_k_general} becomes
\begin{gather}\label{eq:RHS_G_k_small_m_expand}
   \sum _{j=0}^{m-1}p^j E^j \sum_{r=0}^{m-1}\mbinom{\alpha-r}{j}H(m, \alpha, r)G_{r(p-1)+k^*}\pmod{p^m}.
\end{gather}
By Proposition \ref{prop:main_G_k_cong}, 
the $j=0$ term in \eqref{eq:RHS_G_k_small_m_expand} gives the left side of \eqref{eq:RHS_G_k_general} modulo $p^m$.

To treat the terms with $j\geq 1$ we
 expand each Eisenstein series $G_{r(p-1)+k^*}$ modulo $p^{m-j}$ using Proposition~\ref{prop:main_G_k_cong} and rearrange to find that 
\begin{gather}
\begin{aligned}\label{eq:RHS_G_k_small_minusj}
  \sum_{r=0}^{m-1}\mbinom{\alpha-r}{j}&H(m, \alpha, r)G_{r(p-1)+k^*}\\
  &\equiv   
  \sum_{r=0}^{m-1}\mbinom{\alpha-r}{j}H(m, \alpha, r)\sum_{s=0}^{m-j-1}H(m-j, r,s) G_{s(p-1)+k^*}\\
  &\equiv \sum_{s=0}^{m-j-1}G_{s(p-1)+k^*}\sum_{r=0}^{m-1}\mbinom{\alpha-r}{j}H(m, \alpha, r)H(m-j, r,s)\pmod{p^{m-j}}.
\end{aligned}
\end{gather}
Theorem~\ref{thm:main_G_k_cong}  follows from 
\eqref{eq:RHS_G_k_small_m_expand},
\eqref{eq:RHS_G_k_small_minusj},  and the next proposition (recall from the definition \eqref{eq:def_Hmkr} that $H(m-j, r, s)=0$  for $r<s$).
\end{proof}
\begin{proposition}
    \label{prop:combin_H}
    For $1\leq j\leq m-1$, $0\leq s\leq m-j-1$, and $\alpha\geq 0$ we have 
    \begin{gather}\label{eq:Sumdef}
      \sum_{r=s}^{m-1}\mbinom{\alpha-r}{j} H(m, \alpha, r)H(m-j, r,s)=0.  
    \end{gather}
\end{proposition}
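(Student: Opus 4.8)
The plan is to recognize the quantity $H(n,\alpha,r)$ as a Lagrange interpolation basis polynomial, which collapses the identity to the exactness of polynomial interpolation. First I would establish the structural fact that, as a polynomial in $\alpha$,
\[
H(n,\alpha,r)=\prod_{\substack{0\le t\le n-1\\ t\ne r}}\frac{\alpha-t}{r-t},
\]
that is, $H(n,\alpha,r)$ is the $r$-th Lagrange cardinal polynomial for the nodes $\{0,1,\dots,n-1\}$. This follows by rewriting the definition \eqref{eq:def_Hmkr}: expanding $\binom{\alpha}{r}\binom{\alpha-1-r}{n-1-r}$ as a product of falling factorials shows that its numerator is exactly $\prod_{t\ne r}(\alpha-t)$ and its denominator is $r!\,(n-1-r)!$, while the Lagrange denominator satisfies $\prod_{t\ne r}(r-t)=(-1)^{n-1-r}\,r!\,(n-1-r)!$; since $(-1)^{n-1-r}=(-1)^{n+1+r}$, the two expressions agree. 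This also recovers the relation $H(n,\alpha,r)=\delta_{r,\alpha}$ for $0\le\alpha\le n-1$ noted in the paper, as Lagrange bases are cardinal on their own nodes.

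With this in hand, the tool I would use is the exactness principle: for every polynomial $P$ of degree at most $m-1$ one has $\sum_{r=0}^{m-1}H(m,\alpha,r)\,P(r)=P(\alpha)$ as a polynomial identity in $\alpha$, since interpolation through the $m$ nodes $0,\dots,m-1$ reproduces every polynomial of degree $<m$. I would then apply this with $P(r):=\binom{\alpha-r}{j}H(m-j,r,s)$, holding $\alpha,j,s$ fixed. Viewed as a function of $r$, the factor $\binom{\alpha-r}{j}$ has degree $j$ and $H(m-j,r,s)$ has degree $m-j-1$, so $\deg_r P=m-1$, exactly within range. The exactness principle therefore gives
\[
\sum_{r=0}^{m-1}\binom{\alpha-r}{j}H(m,\alpha,r)\,H(m-j,r,s)=\binom{0}{j}H(m-j,\alpha,s)=0,
\]
because $\binom{0}{j}=0$ for $j\ge1$. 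Finally, since the factor $\binom{r}{s}$ inside $H(m-j,r,s)$ forces $H(m-j,r,s)=0$ for $r<s$, the summation range may be trimmed to $r=s,\dots,m-1$, which is precisely \eqref{eq:Sumdef}.

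The only real content is the first step, the identification of $H$ with the Lagrange basis; after that the proposition is immediate and no symbolic summation is required for this argument. The point needing care is the degree bookkeeping together with the substitution $r\mapsto\alpha$: the $\alpha$ appearing inside $\binom{\alpha-r}{j}$ is the same indeterminate as the interpolation point, so setting $r=\alpha$ legitimately produces the vanishing factor $\binom{0}{j}$. As an independent cross-check one can instead confirm \eqref{eq:Sumdef} directly by creative telescoping in $\alpha$, which is presumably what the Sigma computation accomplishes.
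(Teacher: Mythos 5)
Your proof is correct, and it takes a genuinely different and more conceptual route than the paper. The key identification checks out: writing out the falling factorials shows that $H(n,\alpha,r)$ is precisely the Lagrange cardinal polynomial $\prod_{0\le t\le n-1,\ t\ne r}(\alpha-t)/(r-t)$ for the nodes $0,1,\dots,n-1$ (your sign bookkeeping $(-1)^{n-1-r}=(-1)^{n+1+r}$ is right), and the degree count $j+(m-j-1)=m-1$ places your $P$ exactly within the range where interpolation is exact. The delicate substitution step is legitimate for the reason you give: for each fixed $\alpha$, the function $y\mapsto\binom{\alpha-y}{j}H(m-j,y,s)$ is a polynomial of degree $m-1$ in $y$, so the interpolation identity $\sum_{r=0}^{m-1}H(m,z,r)P(r)=P(z)$ holds identically in the evaluation variable $z$, and setting $z=\alpha$ produces the vanishing factor $\binom{0}{j}=0$; trimming the sum to $r\ge s$ via $\binom{r}{s}=0$ matches the paper's own remark. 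By contrast, the paper proves the identity by computer-assisted symbolic summation: Schneider's Sigma package supplies a telescoping certificate yielding the recurrence $(\alpha-m)S(m)+(m-s)S(m+1)=0$ in $m$, and the base case $S(s+j+1)=0$ is then evaluated in closed form via the Chu--Vandermonde theorem. Your argument buys elementarity and transparency: it needs no software, holds as a polynomial identity in $\alpha$, and simultaneously explains the paper's observation that $H(m,\alpha,r)=\delta_{r,\alpha}$ for $0\le\alpha\le m-1$ (a Lagrange basis is cardinal on its own nodes). What the paper's method buys is automation: it requires no structural insight into $H$ and would apply mechanically to similar binomial/hypergeometric sums where no such interpolation structure is visible.
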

\begin{proof}
To analyze this sum we use the Mathematica package Sigma developed by Carsten Schneider \cite{Schneider_sigma} (we are grateful to him for advice regarding its use).
Let $F(m,r)$ be the summand in \eqref{eq:Sumdef}; we have
\begin{gather*}
    F(m, r)=(-1)^{r+j+s}\mbinom{\alpha -r}{j}\mbinom{\alpha-1-r}{m-1-r}\mbinom{\alpha }{r}\mbinom{r-1-s}{m-j-1-s} \mbinom{r}{s}.    
\end{gather*}
The creative telescoping algorithm in Sigma produces the function
\begin{gather*}
       G(r):=(-1)^{r+j+s}
       \frac{(s-r) (j+r-\alpha)
   \binom{r}{s} \binom{\alpha}{r}\binom{\alpha -r}{j} \binom{\alpha-1-r}{m-1-r} 
   \binom{r-1-s}{m-j-1-s}}{m-j-s}
   \end{gather*}
with the following property: 
\begin{gather}\label{eq:telescope}
(\alpha-m)F(m,r)+(m-s)F(m+1, r)=G(r)-G(r-1).
\end{gather}
   Note that $G(r)$ is defined for all values of the parameters in the proposition since $m-j-s>0$.
Details on how Sigma produces the function $G(r)$ are given in \cite{Schneider_sigma}.  The important fact for our purposes is that equation \eqref{eq:telescope}, once it is known, can be verified by a routine computation.  Indeed, both sides of the equation reduce to 
\begin{gather*}
    \mfrac{(-1)^{j+r+s}\Gamma (\alpha +1) \Gamma (\alpha -r) }{\Gamma (j)
   \Gamma (s+1) \Gamma (\alpha -m) \Gamma (m-r+1) \Gamma (\alpha -j-r+1)
   \Gamma (j-m+r+1) \Gamma (-j+m-s+1)}.
\end{gather*}

Let  $S(m)$ be the sum in \eqref{eq:Sumdef}.  Summing  \eqref{eq:telescope} from $r=s$ to $m-1$ gives 
\begin{gather}\label{eq:telescope1}
 (\alpha-m)S(m)+(m-s)S(m+1)=(m-s)F(m+1,m)+G(m-1)-G(s-1).   
\end{gather}
It is clear from the definition
that $G(s-1)=0$, and   
a computation shows that
\begin{gather*}
    -G(m-1)=\mfrac{(-1)^{j+m+s}\Gamma (\alpha +1) }{\Gamma (j) \Gamma (j+1) \Gamma
   (s+1) \Gamma (\alpha -j-m+1) \Gamma (-j+m-s+1)}=(m-s)F(m+1,m).
\end{gather*}
It follows from \eqref{eq:telescope1}
that
\begin{gather}\label{eq:Sum_telescope}
 (\alpha-m)S(m)+(m-s)S(m+1)=0.   
\end{gather}

To finish, fix $j\geq 1$ and $s\geq 0$.    We must  prove that $S(m)=0$ for all $m\geq s+j+1$; from the recurrence \eqref{eq:Sum_telescope} it will suffice to prove that $S(s+j+1)=0$.
To this end we compute
\begin{gather}
    S(s+j+1)=\sum_{r=s}^{s+j}
    (-1)^{r+j+s}
    \mbinom{\alpha-r}{j}
    \mbinom{\alpha-1-r}{s+j-r}
    \mbinom{\alpha}{r}
    \mbinom{r}{s}.
\end{gather}
If $\alpha\leq s+j$ then the second binomial coefficient is  zero and we are done.

When  $\alpha>s+j$ we simplify as follows with $\beta=\alpha-s>j$:
\begin{align}
    \label{eq:creative_two}
    S(s+j+1)
    &=\sum_{r=0}^{j}
    (-1)^{r+j}
    \mbinom{\alpha-r-s}{j}
    \mbinom{\alpha-1-r-s}{j-r}
    \mbinom{\alpha}{r+s}
    \mbinom{r+s}{s}\\
    &=(-1)^j\mbinom{\alpha}{s}\sum_{r=0}^{j}
    (-1)^{r}
    \mbinom{\alpha-r-s}{j}
    \mbinom{\alpha-1-r-s}{j-r}
    \mbinom{\alpha-s}{r}\\
&=(-1)^j\mbinom{\beta+s}{s}\sum_{r=0}^{j}
    (-1)^{r}
    \mbinom{\beta-r}{j}
    \mbinom{\beta-1-r}{j-r}
    \mbinom{\beta}{r}.
\end{align}
A short computation shows that
\begin{gather}
   S(s+j+1) =(-1)^j\mbinom{\beta+s}{s}\mbinom{\beta}{j}
   \mbinom{\beta -1}{j} \,
   _2F_1(-j,j-\beta ;1-\beta
   ;1).
\end{gather}
By the  Chu-Vandermonde theorem \cite[Corollary 2.2.3]{AndrewAskeyRoy}, the hypergeometric function evaluates to 
\begin{gather}
    \frac{(1-j)_j}{(1-\beta)_j},
\end{gather}
where $(a)_j=a(a+1)\dots(a+j-1)$ is the Pochammer symbol.
This finishes the proof since 
the denominator is non-zero when  $\beta>j$.
\end{proof}

\begin{proof}[Proof of Corollary~\ref{cor:main_factor_filt}]
We may  assume that  $k > (m-1)(p-1) + k_0(m)$;
otherwise the result clearly holds.   Writing 
 $k = \alpha(p-1)+k_0(m)$
 with $\alpha>m-1$, Theorem~\ref{thm:main_G_k_cong} shows that there exists 
 $g\in M_{(m-1)(p-1)+k_0(m)}$ with 
\begin{gather}
    G_{\alpha(p-1)+k_0(m)} \equiv g E_{p-1}^{\alpha -m+1} \pmod{p^m},
\end{gather}
 which  establishes Corollary~\ref{cor:main_factor_filt}.
\end{proof}

\section{Proof of Theorem~\ref{thm:main_E_k_cong} and Corollary~\ref{cor:E_beta (p-1)_fact_filt}}
\label{sec:proof_thm_1.2}

To treat weights which are divisible by $p-1$ we begin by proving the following congruence for Bernoulli numbers. 
\begin{proposition}\label{prop:E_p-1_Bernoulli_cong}
Suppose that $p\geq 5$ is prime, that  $\alpha\geq 1$,  and that $1\leq m\leq p-1$.
Then for any positive integer $d$ with $p\nmid d$ we have 
\begin{gather*}
d^{\alpha(p-1)}\,\frac{\alpha }{B_{\alpha(p-1)}}\equiv \sum_{r=1}^{m-1}H(m, \alpha, r)\,d^{r(p-1)}\,\frac {r}{B_{r(p-1)}}\pmod{p^m}.
\end{gather*}
\end{proposition}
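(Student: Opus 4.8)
The plan is to exhibit the left-hand side as $f(\alpha)$ for a single function $f$ and then invoke a bounded form of Proposition~\ref{prop:sun_regular}. Put
\[
f(\alpha):=d^{\alpha(p-1)}\,\frac{\alpha}{B_{\alpha(p-1)}},\qquad f(0):=0.
\]
First I would check that $f$ is $p$-integral: by \eqref{eq:Bvalp-1} we have $\nu_p(B_{\alpha(p-1)})=-1$ for every $\alpha\geq 1$, so $\nu_p\!\left(\alpha/B_{\alpha(p-1)}\right)=\nu_p(\alpha)+1\geq 1$, while $d^{\alpha(p-1)}$ is a $p$-adic unit. In particular the $r=0$ summand on the right-hand side vanishes (it carries the factor $0/B_0$), which is exactly why the sum may be started at $r=1$; thus the asserted congruence is precisely the statement $f(\alpha)\equiv\sum_{r=0}^{m-1}H(m,\alpha,r)\,f(r)\pmod{p^m}$.

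Were $f$ fully $p$-regular in the sense of \eqref{eq:def_p_regular}, this would follow for every $m$ from Proposition~\ref{prop:sun_regular}(2); the hypothesis $m\leq p-1$ signals that only a bounded form of regularity is at hand. Concretely, I would aim to prove the finite-difference estimate
\[
\nu_p\!\left(\sum_{k=0}^{n}\binom{n}{k}(-1)^k f(k)\right)\ \geq\ \min(n,\,p-1)\qquad\text{for all }n\geq 1,
\]
and then rerun the proof of Proposition~\ref{prop:sun_regular}(2): applying the inversion formula \eqref{eq:H_inversion} with parameter $m$, every remainder term carries a finite difference of some order $r\geq m$, which (since $m\leq p-1$) has $p$-adic valuation at least $\min(r,p-1)\geq m$. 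Summing, the remainder is $\equiv 0\pmod{p^m}$, which yields the proposition in the stated range.

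To establish the estimate I would factor $f$ as the product of $a(k):=d^{k(p-1)}$ and $g(\alpha):=\alpha/B_{\alpha(p-1)}$, the first of which is $p$-regular (as checked in the proof of Proposition~\ref{prop:main_G_k_cong}). Writing $T_n(h):=\sum_{k=0}^n\binom{n}{k}(-1)^k h(k)$ and expanding $T_n(ag)$ by the discrete Leibniz rule for finite differences, each term pairs a difference of $a$ of some order $i$ (of valuation $\geq i$, by $p$-regularity) with a finite difference of $g$ of order $n-i$ based at a shifted point; granting the bound $\nu_p\geq\min(n-i,\,p-1)$ for the latter, each term has valuation $\geq i+\min(n-i,p-1)\geq\min(n,p-1)$. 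Thus everything reduces to the single function $g$, that is, to the bound
\[
\nu_p\!\left(\sum_{k=1}^{n}\binom{n}{k}(-1)^k\,\frac{k}{B_{k(p-1)}}\right)\ \geq\ \min(n,\,p-1).
\]

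The main obstacle is precisely this bound, which amounts to controlling how the $p$-adic unit $p\,B_{\alpha(p-1)}$ varies with $\alpha$ modulo powers of $p$. This is a Kummer-type congruence in the exceptional case $(p-1)\mid k$, where the usual congruence \eqref{eq:kummer} does not apply because $B_{\alpha(p-1)}$ is not $p$-integral. I expect to prove it by relating $p\,(1-p^{\alpha(p-1)-1})\,B_{\alpha(p-1)}$ to the values at $\alpha$ of a $p$-adically interpolating function whose Mahler coefficients decay; extracting $\nu_p(T_n g)\geq\min(n,p-1)$ then forces the cutoff at $p-1$, and hence the restriction $m\leq p-1$, from the precision to which this interpolation can be controlled by elementary means. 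Removing this cutoff is exactly the unproved Bernoulli congruence discussed in Section~\ref{sec:speculation}.
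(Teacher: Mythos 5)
Your reductions are sound: restating the proposition as $f(\alpha)\equiv\sum_{r=0}^{m-1}H(m,\alpha,r)f(r)\pmod{p^m}$ for $f(k)=d^{k(p-1)}k/B_{k(p-1)}$, deriving this from a truncated regularity bound $\nu_p(T_n f)\geq\min(n,p-1)$ via the inversion formula \eqref{eq:H_inversion}, and splitting off the $p$-regular factor $d^{k(p-1)}$ by the discrete Leibniz rule are all correct steps (modulo the routine remark, which you elide, that a valuation bound on differences based at $0$ propagates to shifted base points via the Newton expansion). But the proof has a genuine gap at exactly the point you flag: the bound
\begin{gather}
\nu_p\left(\sum_{k=1}^{n}\mbinom{n}{k}(-1)^k\,\frac{k}{B_{k(p-1)}}\right)\ \geq\ \min(n,\,p-1)
\end{gather}
is never proved, only promised. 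Worse, the proposed route is circular: the Mahler coefficients of a $p$-adic interpolation of $k\mapsto k/B_{k(p-1)}$ \emph{are} (up to sign) the finite differences $T_n$, so ``Mahler coefficients decay'' is a restatement of the bound, not a method for proving it. Since this estimate is the entire analytic content of the proposition --- everything else in your write-up is formal bookkeeping with $p$-regular functions --- the argument as it stands does not establish the result.

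For comparison, the paper supplies precisely this missing ingredient, and by a different mechanism than direct estimation. It starts from the \emph{known} congruence of Sun \cite[Theorem~3.1]{Sun97} for the differences of $f(k)=(p-p^{k(p-1)})B_{k(p-1)}$, which are $\equiv 0$ or $\equiv p^{n-1}\pmod{p^n}$ according as $(p-1)\nmid n$ or $(p-1)\mid n$; it then adds an explicit correction function $g$ (built by binomial inversion from the sequence $a(n)=-p^{n-1}$ supported on $(p-1)\mid n$) so that $f+g$ is genuinely $p$-regular, with no cutoff. Closure of $p$-regular functions under products, together with the unit-inversion trick $1/(f+g)\equiv (f+g)^{\phi(p^n)-1}\pmod{p^n}$ and the regularity of $k\mapsto pk$ and $k\mapsto d^{k(p-1)}$, then gives the full congruence for $d^{k(p-1)}pk/(f(k)+g(k))$ for \emph{all} $m$; the hypothesis $m\leq p-1$ enters only at the last step, through the approximation $pr/(f(r)+g(r))\equiv r/B_{r(p-1)}\pmod{p^{p-1}}$ (the error terms $p^{r(p-1)}B_{r(p-1)}$ and $g(r)$ both have valuation $\geq p-2$). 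So your diagnosis of where the constraint $m\leq p-1$ comes from is correct in spirit, but to complete your argument you would need to either prove your key bound directly (e.g.\ by importing Sun's theorem and the correction trick, at which point you have reproduced the paper's proof) or find an independent argument for it, which you have not done. Your identification of the unrestricted bound with the conjecture of Section~\ref{sec:speculation} is also only heuristic: \eqref{eq:ber_conj} concerns the shifted values $(r(p-1)+k^*)/B_{r(p-1)+k^*}$ with $k^*>m$, which is related to, but not literally the same as, full $p$-regularity of $k\mapsto k/B_{k(p-1)}$.
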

\begin{proof}[Proof of Proposition~\ref{prop:E_p-1_Bernoulli_cong}]
Define the function 
\begin{gather}
    \label{eq:def_fk}
    f(k):=\left(p-p^{k(p-1)}\right)B_{k(p-1)}\quad\text{for}\quad k\geq 0.
\end{gather}
If $n\geq 1$ then by \cite[Theorem~3.1]{Sun97} we have
\begin{gather}
    \label{eq:sun97cong}
    \sum_{k=0}^n\mbinom{n}{k}(-1)^k f(k)\equiv
    \begin{cases}
        0\ \ \ \ \pmod{p^n},\quad&\text{if}\quad (p-1)\nmid n;\\
        p^{n-1}\pmod{p^n},\quad&\text{if}\quad (p-1)\mid n.
    \end{cases}
\end{gather}
Define the sequence $\{a(n)\}$ by 
\begin{gather}
    \label{eq:def_an}
    a(n):=\begin{cases}
        0,\quad&\text{if}\quad n=0\quad\ \text{or}\quad\  (p-1)\nmid n;\\
        -p^{n-1},\quad&\text{if}\quad n>0\quad\text{and}\quad (p-1)\mid n,
    \end{cases}
\end{gather}
and the function $g(k)$ by 
\begin{gather}
    \label{eq:def_gk}
    g(k):=\sum_{n=0}^k \mbinom{k}{n}(-1)^n a(n)\quad\text{for}\quad k\geq 0.
\end{gather}
From  binomial inversion we have
\begin{gather}
\sum_{k=0}^n \mbinom{n}{k}(-1)^k g(k)=a(n);
\end{gather}
it follows from \eqref{eq:sun97cong} that the function  $f(k)+g(k)$ is 
$p$-regular. 

Now let $n\in\Z_{>0}$. 
By \eqref{eq:Bvalp-1} we have $p\nmid(f(k)+g(k))$.
It follows from Proposition~\ref{prop:sun_regular} that 
$\left(f(k)+g(k)\right)^{\phi(p^n)-1}$ is $p$-regular.
Since
\begin{gather*}
    \sum_{k=0}^n\mbinom{n}{k}(-1)^k\frac1{f(k)+g(k)}\equiv \sum_{k=0}^n\mbinom{n}{k}(-1)^k\left(f(k)+g(k)\right)^{\phi(p^n)-1}\equiv 0\pmod{p^n}
\end{gather*}
we conclude  that
$1/(f(k)+g(k))$ is also $p$-regular.
From the identity
\begin{gather}\label{eq:binom_kron_delta}
\sum_{k=0}^n\mbinom{n}{k}(-1)^k k=-\delta_{n 1}
\end{gather}
we see that the function $k\mapsto p k$ is  $p$-regular.
Recalling that the same is true of 
$k\mapsto d^{k(p-1)}$ when $p\nmid d$, 
we deduce from Proposition~\ref{prop:sun_regular} that for $\alpha, m\geq 1$ and $p\nmid d$ we have
\begin{gather}
    \label{eq:invert_f_plus_g}
   d^{\alpha(p-1)}\, \frac{p \alpha }{f(\alpha)+g(\alpha)}\equiv 
    \sum_{r=1}^{m-1}H(m, \alpha, r)d^{r(p-1)}\frac {pr}{f(r)+g(r)}\pmod{p^m}.
\end{gather}
From \eqref{eq:Bvalp-1} and \eqref{eq:def_fk} we see that $f(r)$ is a $p$-unit and that 
\begin{gather*}
    f(r)\equiv p B_{r(p-1)}\pmod{p^{p-2}}.
\end{gather*}
Furthermore   \eqref{eq:def_an} and \eqref{eq:def_gk} show that 
\begin{gather*}
    g(r)\equiv 0\pmod{p^{p-2}}.
\end{gather*}
Combining these facts gives
\begin{gather*}
\label{eq:bernoulli_last_cong}
    \frac{pr}{f(r)+g(r)}\equiv
    \frac {pr}{pB_{r(p-1)}}\equiv\frac {r}{B_{r(p-1)}}\pmod{p^{p-1}}
    \quad \text{for}\quad r\geq 1.
\end{gather*}
The proposition follows from this congruence together with 
\eqref{eq:invert_f_plus_g} 
since $p-1\geq m$.
\end{proof}

We  use Proposition~\ref{prop:E_p-1_Bernoulli_cong} to prove the analogous congruence between modular forms of varying weights. 
\begin{proposition}
\label{prop:main_E_k_cong}
Suppose that $p\geq 5$ is prime, that  $\alpha\geq 1$,  and that $1\leq m\leq p-1$.
Then
\begin{gather*}
    E_{\alpha(p-1)}\equiv \sum_{r=0}^{m-1}H(m, \alpha, r)E_{r(p-1)}\pmod{p^m}.
\end{gather*}
\end{proposition}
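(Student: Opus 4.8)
The plan is to compare the two sides of the asserted congruence coefficient by coefficient, reducing everything to the Bernoulli congruence already established in Proposition~\ref{prop:E_p-1_Bernoulli_cong}. Recall from \eqref{eq:eis_series} that $E_0=1$ and that for $k\geq 1$ the coefficient of $q^n$ with $n\geq 1$ in $E_k$ equals $-\tfrac{2k}{B_k}\sigma_{k-1}(n)$. I would first dispose of the constant terms. Each $E_{r(p-1)}$ has constant term $1$, so the constant term of the right-hand side is $\sum_{r=0}^{m-1}H(m,\alpha,r)$. Substituting $f\equiv 1$ into the inversion formula \eqref{eq:H_inversion} and using that $\sum_{s=0}^{r}\mbinom{r}{s}(-1)^{s}=0$ for $r\geq 1$ kills the second sum, leaving
\begin{gather*}
\sum_{r=0}^{m-1}H(m,\alpha,r)=1,
\end{gather*}
which matches the constant term of $E_{\alpha(p-1)}$ exactly (alternatively, this follows modulo $p^m$ from Proposition~\ref{prop:sun_regular} applied to the trivially $p$-regular function $f\equiv 1$). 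Since $E_0$ contributes nothing to the coefficients of $q^n$ for $n\geq 1$, the term $r=0$ may be discarded from the non-constant part.

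It then remains to show, for each $n\geq 1$, the congruence of the $q^n$-coefficients, which after cancelling the common $p$-adic unit $-2(p-1)$ takes the form
\begin{gather*}
\frac{\alpha}{B_{\alpha(p-1)}}\,\sigma_{\alpha(p-1)-1}(n)\equiv \sum_{r=1}^{m-1}H(m,\alpha,r)\,\frac{r}{B_{r(p-1)}}\,\sigma_{r(p-1)-1}(n)\pmod{p^m}.
\end{gather*}
I would expand $\sigma_{r(p-1)-1}(n)=\sum_{d\mid n}d^{r(p-1)-1}$ and split the divisor sum according to whether $p\nmid d$ or $p\mid d$, treating the two ranges separately on both sides.

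For divisors with $p\nmid d$, the key move is to write $d^{r(p-1)-1}=d^{-1}\,d^{r(p-1)}$, where $d^{-1}$ is a $p$-adic unit independent of $r$. Multiplying the congruence of Proposition~\ref{prop:E_p-1_Bernoulli_cong} through by $d^{-1}$ and summing over all such $d\mid n$ reproduces exactly the contribution of the $p\nmid d$ divisors to both sides of the display above. For divisors with $p\mid d$, I would instead bound the $p$-adic valuation of each term: by \eqref{eq:Bvalp-1} one has $\nu_p\!\left(\tfrac{r}{B_{r(p-1)}}\right)=\nu_p(r)+1\geq 1$, while $\nu_p\!\left(d^{r(p-1)-1}\right)\geq r(p-1)-1$, so each such term has valuation at least $\nu_p(r)+r(p-1)\geq r(p-1)\geq p-1\geq m$ (using $m\leq p-1$), and the same estimate with $r=\alpha$ controls the left-hand side. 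Hence the $p\mid d$ divisors contribute nothing modulo $p^m$, and combining the two cases establishes the non-constant congruence.

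The two coefficient identities are routine once this split is made; the one step requiring genuine care is the valuation bound for the divisors divisible by $p$. There the extra factor of $p$ coming from $\nu_p\!\left(\tfrac{r}{B_{r(p-1)}}\right)\geq 1$ is precisely what pushes the valuation from $r(p-1)-1$ up to at least $r(p-1)\geq m$, and this margin is exactly needed in the boundary case $m=p-1$, $r=1$. This is also the only place the hypothesis $m\leq p-1$ is used beyond its role inside Proposition~\ref{prop:E_p-1_Bernoulli_cong} itself.
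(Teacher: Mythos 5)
Your proof is correct and takes essentially the same route as the paper's: constant terms via the inversion identity \eqref{eq:H_inversion} with $f\equiv 1$, and non-constant terms by dividing the congruence of Proposition~\ref{prop:E_p-1_Bernoulli_cong} by the unit $d$ for $p\nmid d$, while killing the divisors with $p\mid d$ through the valuation bound $\nu_p\bigl(d^{r(p-1)-1}\tfrac{r}{B_{r(p-1)}}\bigr)\geq p-1\geq m$ coming from \eqref{eq:Bvalp-1}. The only difference is that you spell out the bookkeeping (cancelling the unit $-2(p-1)$ and the explicit valuation count) slightly more explicitly than the paper does.
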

\begin{proof}
We prove this congruence term by term. To see that the constant terms on each side agree, we use 
 \eqref{eq:H_inversion} with $f(s) =1$ 
 and the fact that 
\begin{gather}
\sum_{k=0}^n\mbinom{n}{k}(-1)^k =\delta_{n 0}.
\end{gather}

By Proposition~\ref{prop:E_p-1_Bernoulli_cong}, when $p\nmid d$ we have 
 \begin{gather}\label{eq:p-1_div}
    d^{\alpha(p-1)-1}\frac{\alpha }{B_{\alpha(p-1)}} \equiv \sum_{r=1}^{m-1}H(m,\alpha,r) d^{r(p-1)-1}\frac{r}{B_{r(p-1)}} \pmod{p^m}.
\end{gather}
From the first assertion of \eqref{eq:Bvalp-1} we see that when $p\mid d$ we have 
\begin{gather}
    d^{r(p-1)-1}\frac{r }{B_{r(p-1)}}\equiv 0\pmod{p^{p-1}},\quad r\geq 1.
\end{gather}
Since $p-1\geq m$ it follows that  for every positive $n$ we have
\begin{gather}
    \frac{\alpha }{B_{\alpha(p-1)}}\sigma_{\alpha(p-1) -1}(n) \equiv \sum_{r=1}^{m-1}H(m,\alpha,r) \frac{r}{B_{r(p-1)}} \sigma_{r(p-1) -1}(n) \pmod{p^m},
\end{gather}
which shows that the non-constant terms agree and proves the proposition.
\end{proof}
\begin{proof}[Proof of Theorem~\ref{thm:main_E_k_cong}]
We proceed as in the proof of Theorem~\ref{thm:main_G_k_cong}; writing 
 $E_{p-1} = 1+pE$ the right side of \eqref{eq:ek_modp} becomes 
\begin{gather}\label{eq:RHS_E_k_small_m_expand}
    \sum_{j=0}^{m-1}p^jE^j\sum_{r=0}^{m-1}\mbinom{\alpha-r}{j}H(m, \alpha, r)E_{r(p-1)}\pmod{p^m}.
\end{gather}
The $j=0$ term gives 
the left side of \eqref{eq:ek_modp} by 
Proposition~\ref{prop:main_E_k_cong}.
To show that the other terms vanish modulo $p^m$ we 
 proceed as before.  In particular, 
 expanding each $E_{r(p-1)}$ modulo $p^{m-j}$ using Proposition~\ref{prop:main_E_k_cong} and rearranging leads
 again to the combinatorial identity of Proposition~\ref{prop:combin_H}.
\end{proof}
\begin{proof}[Proof of Corollary~\ref{cor:E_beta (p-1)_fact_filt}] 
This follows immediately from Theorem~\ref{thm:main_E_k_cong}.
\end{proof}

\section{Congruences modulo \texpdf{$p^3$}{p cubed} and \texpdf{$p^4$}{p fourth}}\label{sec_m=3_4}
Here we give more precise versions of Corollary~\ref{cor:main_factor_filt} when $m=3$ and $m=4$.  The statements rapidly become more complicated as $m$ increases.
\begin{corollary}\label{cor:main_factor_filt_p_cubed}
    Let $p\geq 5$ be  prime and write $k\geq 4$ as $k=\alpha(p-1)+k_0$ with $2\leq k_0\leq p-3.$ 
    \begin{enumerate}
        \item 
If $k_0\geq 4$ then 
\begin{align*}
    \tilde \omega_{p^3}\left(G_{k}\right)\leq\begin{cases}
        (p-1)+k_0, &\text{if} \ 
        \alpha\equiv 0,1\pmod{p};\\
        2(p-1)+k_0, &\text{otherwise}.
    \end{cases}
\end{align*}
\item 
If $k_0= 2$ then 
\begin{align*}
    \tilde \omega_{p^3}\left(G_{k}\right)\leq\begin{cases}
        (p-1)+2, &\text{if} \ 
        \alpha\equiv 1\pmod{p};\\
    2(p-1)+2, &\text{if} \ 
        \alpha\equiv 2\pmod{p};\\
        3(p-1)+2, &\text{otherwise}.
    \end{cases}
\end{align*}
\end{enumerate}
\end{corollary}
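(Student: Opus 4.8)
The plan is to run the $m=3$ case of Theorem~\ref{thm:main_G_k_cong}, then reduce each summand with the $m=2$ case, exploiting the fact that several of the coefficients vanish modulo $p$ for special residues of $\alpha$. Throughout I would use the explicit values
\[
H(3,\alpha,0)=\binom{\alpha-1}{2},\qquad H(3,\alpha,1)=-\alpha(\alpha-2),\qquad H(3,\alpha,2)=\binom{\alpha}{2},
\]
together with the $m=2$ expansion $G_{2(p-1)+k^*}\equiv E_{p-1}\bigl(-G_{k^*}E_{p-1}+2G_{(p-1)+k^*}\bigr)\pmod{p^2}$ supplied by Proposition~\ref{prop:main_G_k_cong}. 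The small cases $\alpha\leq 2$ (resp.\ $\alpha\leq 3$) are handled separately: there the expansions are trivial by the first remark after Theorem~\ref{thm:main_G_k_cong}, and the asserted bounds hold on inspection, so I may assume $\alpha$ large enough to factor out the stated power of $E_{p-1}$.

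For part (1) I take $k^*=k_0\geq 4$, so Theorem~\ref{thm:main_G_k_cong} writes $G_{\alpha(p-1)+k_0}$ as a combination of $G_{k_0}E_{p-1}^{\alpha}$, $G_{(p-1)+k_0}E_{p-1}^{\alpha-1}$, and $G_{2(p-1)+k_0}E_{p-1}^{\alpha-2}$ modulo $p^3$. The first two terms are already of the shape $E_{p-1}^{\alpha-1}\cdot(\text{form in }M_{(p-1)+k_0})$, so only the $r=2$ term, carrying $G_{2(p-1)+k_0}$ of weight $2(p-1)+k_0$, is dangerous. When $\alpha\equiv 0,1\pmod p$ one has $H(3,\alpha,2)=\binom{\alpha}{2}\equiv 0\pmod p$; writing $H(3,\alpha,2)=pc$ and inserting the $m=2$ reduction of $G_{2(p-1)+k_0}$, the extra factor of $p$ upgrades that mod-$p^2$ congruence to a mod-$p^3$ one and rewrites the term as $E_{p-1}^{\alpha-1}\cdot(\text{form in }M_{(p-1)+k_0})$. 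All three terms then collapse to $E_{p-1}^{\alpha-1}$ times a weight $(p-1)+k_0$ form, giving the first bound; for the other residues $H(3,\alpha,2)$ is a unit and one simply quotes Corollary~\ref{cor:main_factor_filt}.

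For part (2), where $k_0=2$, the constraint $k^*>m=3$ forces $k^*=p+1$, and I would apply Theorem~\ref{thm:main_G_k_cong} with $\alpha$ replaced by $\alpha-1$, so the summands involve $G_{p+1}$, $G_{2(p-1)+2}$, and $G_{3(p-1)+2}$. The analysis of $H(3,\alpha-1,r)\bmod p$ splits along $\alpha\bmod p$. When $\alpha\equiv 2$ the coefficients $H(3,\alpha-1,0)$ and $H(3,\alpha-1,2)$ are divisible by $p$; reducing the top term via $G_{3(p-1)+2}\equiv -G_{p+1}E_{p-1}^2+2G_{2(p-1)+2}E_{p-1}\pmod{p^2}$ brings every summand to $E_{p-1}^{\alpha-2}$ times a weight $2(p-1)+2$ form, yielding that bound. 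When $\alpha\equiv 1$ both $H(3,\alpha-1,1)$ and $H(3,\alpha-1,2)$ vanish mod $p$; after the same reduction of the top term, the surviving copies of $G_{2(p-1)+2}E_{p-1}^{\alpha-2}$ combine with total coefficient $H(3,\alpha-1,1)+2H(3,\alpha-1,2)=\alpha-1\equiv 0\pmod p$. The remaining residues give the crude bound $3(p-1)+2$ of Corollary~\ref{cor:main_factor_filt}.

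The hard part will be the last reduction in the case $k_0=2$, $\alpha\equiv 1\pmod p$: the leftover term $(\alpha-1)\,G_{2(p-1)+2}E_{p-1}^{\alpha-2}$ carries a factor of $p$, but $G_{2(p-1)+2}$ cannot be pushed below its own weight using Proposition~\ref{prop:main_G_k_cong}, since the relevant lift would require $k^*=2\leq m$. This is exactly where sharper mod-$p^2$ input is needed: by the AHRR bound \eqref{eq:AHRR_statement} we have $\tilde\omega_{p^2}\bigl(G_{2(p-1)+2}\bigr)\leq(p-1)+2$, so $G_{2(p-1)+2}\equiv E_{p-1}g_1\pmod{p^2}$ with $g_1\in M_{p+1}$, and the extra $p$ then converts the leftover term into $E_{p-1}^{\alpha-1}$ times a form in $M_{p+1}=M_{(p-1)+2}$. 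Together with the unit $r=0$ contribution this gives $\tilde\omega_{p^3}(G_k)\leq(p-1)+2$, completing the sharpest case.
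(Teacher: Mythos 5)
Your proposal is correct and follows the paper's own strategy: expand via Theorem~\ref{thm:main_G_k_cong} with $m=3$ (taking $k^*=k_0$ for part (1), and $k^*=p+1$ with $\alpha$ replaced by $\alpha-1$ for part (2)), exploit the vanishing modulo $p$ of the relevant coefficients $H(3,\cdot,r)$ for the stated residues of $\alpha$, and reduce the $p$-divisible terms using mod-$p^2$ information.

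The one substantive point where you add to the paper's written argument is the case $k_0=2$, $\alpha\equiv 1\pmod{p}$, and your addition is genuinely needed. The paper dismisses this case with ``an analysis as above,'' but the tool used above (Corollary~\ref{cor:main_factor_filt} with $m=2$) gives only $\tilde\omega_{p^2}\left(G_{2(p-1)+2}\right)\leq (p-1)+k_0(2)=2p$ when $k_0=2$, which would yield the bound $2(p-1)+2$ rather than the claimed $(p-1)+2$; indeed, the paper's own remarks point out that Theorem~\ref{thm:main_G_k_cong} and Corollary~\ref{cor:main_factor_filt} cannot recover \eqref{eq:AHRR_statement} when $k_0=2$. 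Your appeal to the AHRR bound $\tilde\omega_{p^2}\left(G_{2(p-1)+2}\right)\leq (p-1)+2$ is exactly the missing input, and your intermediate identity $H(3,\alpha-1,1)+2H(3,\alpha-1,2)=\alpha-1$ checks out, although given \eqref{eq:AHRR_statement} you could also apply that bound directly to the $r=1$ and $r=2$ terms of \eqref{eq:main_con_k0=2_3} without combining them first. One small slip: the expansion $G_{2(p-1)+k^*}\equiv -G_{k^*}E_{p-1}^2+2G_{(p-1)+k^*}E_{p-1}\pmod{p^2}$ is Theorem~\ref{thm:main_G_k_cong} with $m=2$, not Proposition~\ref{prop:main_G_k_cong} (the latter omits the powers of $E_{p-1}$); this mis-citation does not affect the argument.
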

\begin{corollary}\label{cor:main_factor_filt_p_fourth}
    Let $p\geq 5$ be  prime and write $k\geq 4$ as $k=\alpha(p-1)+k_0$ with $2\leq k_0\leq p-3.$ 
    \begin{enumerate}
        \item 
If $k_0\geq 6$ then 
\begin{align*}
    \tilde \omega_{p^4}\left(G_{k}\right)\leq\begin{cases}
        (p-1)+k_0, &\text{if }  
        \alpha\equiv 0,1\pmod{p^2};\\
        2(p-1)+k_0, &\text{if }\alpha\equiv 0,1,2\pmod{p};\\
        3(p-1)+k_0,&\text{otherwise}.
    \end{cases}
\end{align*}
\item 
If $k_0= 4$ then
\begin{align*}
    \tilde \omega_{p^4}\left(G_{k}\right)\leq\begin{cases}
        (p-1)+4, &\text{if }  
        \alpha\equiv 1\pmod{p^2};\\
    2(p-1)+4, &\text{if } 
        \alpha\equiv 1,2\pmod{p};\\
        3(p-1)+4, &\text{if }\alpha\equiv 3\pmod{p};\\
        4(p-1)+4,&\text{otherwise}.
    \end{cases}
\end{align*}
\item
If $k_0= 2$ then
\begin{align*}
    \tilde \omega_{p^4}\left(G_{k}\right)\leq\begin{cases}
        (p-1)+2, &\text{if }  
        \alpha\equiv 1\pmod{p^2};\\
    2(p-1)+2, &\text{if }  
        \alpha\equiv 2\pmod{p^2};\\
        3(p-1)+2, &\text{if }\alpha\equiv 1,2,3\pmod{p};\\
        4(p-1)+2,&\text{otherwise}.
    \end{cases}
\end{align*}
\end{enumerate}

\end{corollary}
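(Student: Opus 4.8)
The plan is to establish Corollary~\ref{cor:main_factor_filt_p_fourth} as the $m=4$ refinement of Corollary~\ref{cor:main_factor_filt}, by tracking precisely how many factors of $E_{p-1}$ can be extracted from the representation furnished by Theorem~\ref{thm:main_G_k_cong}. Write $E:=E_{p-1}$ and $k=\alpha(p-1)+k_0$, and take the base weight $k^{*}:=k_0(4)$, so that $k^{*}=k_0$ when $k_0\geq 6$ and $k^{*}=(p-1)+k_0$ when $k_0\in\{2,4\}$ (the shift is forced by the hypothesis $k^{*}>m=4$). Applying Theorem~\ref{thm:main_G_k_cong} with $m=4$ gives $G_k\equiv E^{N}g\pmod{p^4}$, where $g=\sum_{r=0}^{3}H(4,\alpha',r)\,G_{r(p-1)+k^{*}}E^{3-r}$ with $\alpha'=\alpha$ (resp.\ $\alpha-1$) and the four Eisenstein series have weights $k^{*},(p-1)+k^{*},2(p-1)+k^{*},3(p-1)+k^{*}$. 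Any factorization $g\equiv E^{t}h\pmod{p^4}$ with $h$ modular then yields $G_k\equiv E^{N+t}h\pmod{p^4}$, so it suffices to produce such an $h$ of the asserted weight under each congruence hypothesis on $\alpha$; the finitely many small $\alpha$ with $N<0$ are handled directly using $H(4,\alpha,r)=\delta_{r,\alpha}$ for $\alpha\leq 3$.

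The engine is a one-step reduction. In $g$ the unique summand not already divisible by $E$ is the top-weight Eisenstein series; re-expanding it through Theorem~\ref{thm:main_G_k_cong} at a modulus $p^{s}$ for which it still carries a factor of $E$ writes it as $E$ times a form of lower weight modulo $p^{s}$. Hence, whenever its coefficient $c$ satisfies $\nu_p(c)\geq 4-s$, I may substitute this expansion, pull out one factor of $E$, and lower the top weight by $p-1$ modulo $p^4$. Iterating, I would peel off $E$ one factor at a time, recording the accumulated coefficient of each new leading form. The decisive simplification is that these accumulated coefficients telescope to single binomial coefficients — for example $H(4,\alpha,2)+3\binom{\alpha}{3}=\binom{\alpha}{2}$, and its shifted analogues, proved by the elementary manipulations of Proposition~\ref{prop:combin_H} — so that for $k_0\geq 6$ the two successive thresholds become $\nu_p(\binom{\alpha}{3})\geq 1$ and $\nu_p(\binom{\alpha}{2})\geq 2$.

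Since $p\geq 5$, these are exactly $\alpha\equiv 0,1,2\pmod p$ and $\alpha\equiv 0,1\pmod{p^2}$, the congruences of part~(1); and because each reduction can be carried out only after its predecessor, its condition is intersected with the earlier ones, which is what produces the nested cases. For $k_0=4$ and $k_0=2$ the same scheme runs with $\alpha$ replaced by $\alpha-1$ and leading coefficient $\binom{\alpha-1}{3}$, but now through three levels; a subtle point is that the modulus $s$ available at a given step — and hence whether the resulting condition is modulo $p$ or modulo $p^2$ — depends on how far the current leading Eisenstein series can be written as $E$ times a lower form, which is largest exactly when a genuine modular base form is available.

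The step I expect to be the main obstacle is $k_0=2$ (part~(3)). Because $G_2$ is only quasimodular, no reduction may use base weight $2$: every intermediate reduction must use base $(p-1)+2$, which lowers the available $s$ and converts several thresholds from congruences modulo $p$ into congruences modulo $p^2$ (this is precisely why part~(3) reads $\alpha\equiv 2\pmod{p^2}$ where part~(2) reads $\alpha\equiv 1,2\pmod p$). Worse, the deepest reduction to weight $(p-1)+2$ is unavailable from Theorem~\ref{thm:main_G_k_cong}, which yields only $G_{2(p-1)+2}\equiv E\,G_{(p-1)+2}\pmod p$; to supply it I would invoke the $k_0=2$ instance of \eqref{eq:AHRR_result}, i.e.\ $G_{2(p-1)+2}\equiv E\,h\pmod{p^2}$ with $h\in M_{(p-1)+2}$, after which the final reduction succeeds exactly when $\nu_p(\alpha-1)\geq 2$. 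The remaining labor, and where errors are easiest, is the exact tracking of the accumulated coefficients through the iterated substitutions and the correct matching of each threshold $\nu_p(c)\geq 4-s$ to the modulus $p$ or $p^2$ in each of the three parts.
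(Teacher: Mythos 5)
Your proposal is correct, and its engine --- Theorem~\ref{thm:main_G_k_cong} with base weight $k_0(4)$ (replacing $\alpha$ by $\alpha-1$ when $k_0\le 4$), $p$-divisibility of the coefficients $H(4,\alpha',r)$ under congruence conditions on $\alpha$, and conversion of that divisibility into extra factors of $E_{p-1}$ via lower-modulus congruences --- is the same as the paper's; the difference is organizational. The paper does not iterate Theorem~\ref{thm:main_G_k_cong} and track accumulated coefficients: it keeps each summand $H(4,\alpha',r)\,G_{r(p-1)+k^*}E_{p-1}^{\alpha'-r}$ intact, applies the filtration facts \eqref{eq:fact_filt_fact1} (in particular $\tilde\omega_{p^{m+1}}(pf)=\tilde\omega_{p^m}(f)$), and bounds every term whose coefficient is divisible by $p$ by quoting the mod $p^3$ statement, Corollary~\ref{cor:main_factor_filt_p_cubed}. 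Thus the paper's argument is an induction on the exponent of the modulus ($p^2\to p^3\to p^4$), while you unroll that induction inside the single mod $p^4$ expansion; your telescoping identities (e.g.\ $H(4,\alpha,2)+3\binom{\alpha}{3}=\binom{\alpha}{2}$, and its analogue $H(4,\alpha,1)-3\binom{\alpha}{3}+2\binom{\alpha}{2}=\alpha$) are correct and do reproduce the stated case conditions, at the cost of exactly the bookkeeping that Corollary~\ref{cor:main_factor_filt_p_cubed} absorbs for the paper --- including the sensitivity of each threshold (mod $p$ versus mod $p^2$) to whether the re-expansion can use the genuine modular base weight $k_0$ or must use $(p-1)+k_0$. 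One point in your favor: you make explicit that the strongest case of part~(3) ($k_0=2$, $\alpha\equiv 1\pmod{p^2}$, bound $(p-1)+2$) is out of reach of Theorem~\ref{thm:main_G_k_cong} and Corollaries~\ref{cor:main_factor_filt} and \ref{cor:main_factor_filt_p_cubed} alone, since $G_2$ is not modular and Corollary~\ref{cor:main_factor_filt} only yields $\tilde\omega_{p^2}\left(G_{j(p-1)+2}\right)\le 2p$; the $k_0=2$ instance of the external congruence \eqref{eq:AHRR_result} must be imported. The paper, which works out only the case $k_0\le 4$, $\alpha\equiv 1\pmod p$ and dismisses the rest as ``similar analysis,'' would need that same ingredient in its omitted details, so flagging it is a genuine contribution of your write-up rather than a defect.
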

\begin{proof}[Proof of  Corollary~\ref{cor:main_factor_filt_p_cubed}.]
 The general cases 
\begin{gather}\tilde\omega_{p^3}\left(G_k\right)\leq
\begin{cases} 2(p-1)+k_0,&\text{if }k_0\geq 4;\\
    3(p-1)+2,&\text{if }k_0=2\end{cases}
    \end{gather}
    follow from Corollary \ref{cor:main_factor_filt} and the fact that $k_0(3)=k_0$ if $k_0\geq 4$ and $k_0(3)=p+1$ if $k_0=2$.

To prove the remaining statement 
when $k_0\geq 4$, we use  Theorem~\ref{thm:main_G_k_cong} to write
\begin{multline}\label{eq:main_cong_k0large_3}
        G_{\alpha(p-1)+k_0}\equiv \mbinom{\alpha-1}{2}G_{k_0}    E_{p-1}^\alpha
        -\alpha(\alpha-2)G_{(p-1)+k_0}E_{p-1}^{\alpha-1}\\
        +\mbinom{\alpha}{2}G_{2(p-1)+k_0}E_{p-1}^{\alpha-2}\pmod{p^3}.
        \end{multline}
It is clear from the definition that if $m\geq 1$
  and if $f$, $g$ are modular forms of weight $k$ modulo $p^m$ for some $k$, then 
\begin{gather}\label{eq:fact_filt_fact1}
    \tilde\omega_{p^{m+1}}(p f)=\tilde\omega_{p^m}(f)\qquad \text{and}\qquad
    \tilde \omega_{p^m}( f+ g)\leq \max\{\tilde\omega_{p^m}( f),\, \tilde\omega_{p^m}(g)\}.
  \end{gather}
When $\alpha\equiv 0,1\pmod{p}$ we have $\binom{\alpha}{2}\equiv 0\pmod{p}$.
Using this fact with 
\eqref{eq:main_cong_k0large_3}
and \eqref{eq:fact_filt_fact1}
gives
    \begin{gather*}
\tilde\omega_{p^3}\left(G_{k}\right)\leq \max\{(p-1)+k_0,\ \tilde\omega_{p^2}\left(G_{2(p-1)+k_0}\right)\},
    \end{gather*}
From Corollary~\ref{cor:main_factor_filt} in the case $m=2$ we conclude that $\tilde\omega_{p^3}\left(G_{k}\right)\leq(p-1)+k_0$, as desired.

 If $k_0=2$ then Theorem~\ref{thm:main_G_k_cong} with $k^*=p+1$ and $\alpha$ replaced by $\alpha-1$ gives
\begin{multline}\label{eq:main_con_k0=2_3}
        G_{\alpha(p-1)+2}\equiv \mbinom{\alpha-2}{2}G_{(p-1)+2}E_{p-1}^{\alpha-1}
        -(\alpha-1)(\alpha-3)G_{2(p-1)+2}E_{p-1}^{\alpha-2}\\+\mbinom{\alpha-1}{2}G_{3(p-1)+2}E_{p-1}^{\alpha-3}\pmod{p^3}.
        \end{multline}
  The claims when $\alpha\equiv 1, 2\pmod p$ follow from an  analysis as above.
\end{proof}

\begin{proof}[Proof of  Corollary~\ref{cor:main_factor_filt_p_fourth}.]
Since the proofs use similar methods   we  discuss only the case when $k_0\leq 4$ and $\alpha\equiv 1\pmod{p}$ for brevity.  Theorem~\ref{thm:main_G_k_cong} with $k^*=k_0+p-1$  and $\alpha$ replaced by $\alpha-1$ 
gives
\begin{multline}
        G_{\alpha(p - 1)+k_0}\equiv 
- \mbinom{\alpha - 2}{3} G_{(p - 1)+k_0}E_{p-1}^{\alpha-1}
+ (\alpha - 1)\mbinom{\alpha - 3}{2}  G_{2(p - 1)+k_0} E_{p-1}^{\alpha-2}\\
 - (\alpha - 4)\mbinom{\alpha - 1}{2} G_{3(p - 1)+k_0}E_{p-1}^{\alpha-3}        
 +\mbinom{\alpha - 1}{3}  G_{4(p - 1)+k_0} E_{p-1}^{\alpha-4}
\pmod{p^4}.
\end{multline}

If $\alpha\equiv 1\pmod p$ then there are $\lambda_1,\lambda_2,\lambda_3,\lambda_4\in\mathbb{Z}_{(p)}$ such that
\begin{multline}
    G_{\alpha(p - 1)+k_0 }\equiv 
    \lambda_1 G_{(p-1)+k_0}E_{p-1}^{\alpha-1}
    +p\lambda_2G_{k_0+2(p-1)}E_{p-1}^{\alpha-2}\\
    +p\lambda_3G_{3(p-1)+k_0}
    +p\lambda_4G_{4(p-1)+k_0}E_{p-1}^{\alpha-4}
    \pmod{p^4}.
\end{multline}
We then use \eqref{eq:fact_filt_fact1} and 
Corollary~\ref{cor:main_factor_filt_p_cubed} to conclude that 
\begin{gather}
 \tilde\omega_{p^4}\left(G_{\alpha(p-1)+k_0}\right)\leq \begin{cases}
     2(p-1)+k_0,&\text{if }k_0=4;\\
     3(p-1)+k_0,&\text{if }k_0=2.
 \end{cases}   
\end{gather}
The remaining cases follow from similar analysis, and we omit the details.
\end{proof}

\section{Possible generalizations}\label{sec:speculation}
Computations suggest that the analogues of Theorem~\ref{thm:main_G_k_cong} and Corollary~\ref{cor:main_factor_filt} are true with $G_k$ replaced by $E_k$ in the case when $(p-1)\mid k$.
In other words, if $k^*>m$ is a multiple of $p-1$, then it appears that we have 
\begin{gather}\label{eq:E_k_conj}
    E_{\alpha(p-1)+k^*}
    \equiv \sum_{r=0}^{m-1}H(m, \alpha, r)E_{r(p-1)+k^*}E_{p-1}^{\alpha-r}\pmod{p^m}.
\end{gather}
From this it follows that for such $k$, 
with $k_0(m)$ as defined  in \eqref{eq:def_k0}, we have
\begin{gather}\label{eq:Ek_filt_speculation}
\tilde\omega_{p^m}\left(E_k\right)\leq (m-1)(p-1)+k_0(m).
\end{gather}

Note that if  $m<p$, then the results in  Theorem~\ref{thm:main_E_k_cong} and Corollary~\ref{cor:E_beta (p-1)_fact_filt} are stronger than the statements   \eqref{eq:E_k_conj} and \eqref{eq:Ek_filt_speculation}.
However, computations suggest that these statements are optimal for general $m$.

To prove these statements 
using the methods of this paper would require 
proving that if $k^*>m$ is a multiple of $p-1$  then for all 
$\alpha\geq 1$ we have
\begin{gather}\label{eq:ber_conj}
  \frac{\alpha(p-1)+k^* }{B_{\alpha(p-1)+k^*}}\equiv \sum_{r=0}^{m-1}H(m, \alpha, r)\,\frac {r(p-1) + k^*}{B_{r(p-1)+k^*}}\pmod{p^m}.
\end{gather}
We have verified the truth of \eqref{eq:ber_conj} when 
$5\leq p<100$, $p \le m \le 2p$, $m \le \alpha \le m+p$, and   $k^*$ is the smallest multiple  of $p-1$ larger than $m$.

\bibliographystyle{plain}
\bibliography{ref.bib}

\begin{thebibliography}{10}

\bibitem{AHRR-Eis-2025}
Scott Ahlgren, Michael Hanson, Martin Raum, and {Olav K.} Richter.
\newblock Eisenstein series modulo $p^2$.
\newblock {\em Forum Mathematicum}, March 2025.

\bibitem{ARR_theta}
Scott Ahlgren, Martin Raum, and {Olav K.} Richter.
\newblock Theta cycles of modular forms modulo $p^2$.
\newblock {\em Preprint}, 2025.

\bibitem{AndrewAskeyRoy}
George~E. Andrews, Richard Askey, and Ranjan Roy.
\newblock {\em Special functions}, volume~71 of {\em Encyclopedia of
  Mathematics and its Applications}.
\newblock Cambridge University Press, Cambridge, 1999.

\bibitem{CohNT07}
Henri Cohen.
\newblock {\em Number theory. {V}ol. {II}. {A}nalytic and modern tools}, volume
  240 of {\em Graduate Texts in Mathematics}.
\newblock Springer, New York, 2007.

\bibitem{wolfram}
Wolfram~Research{,} Inc.
\newblock Mathematica, {V}ersion 14.1.
\newblock Champaign, IL, 2024.

\bibitem{Schneider_sigma}
Carsten Schneider.
\newblock Symbolic summation assists combinatorics.
\newblock {\em S\'{e}m. Lothar. Combin.}, 56:Art. B56b, 36, 2006/07.

\bibitem{Serre_padic}
Jean-Pierre Serre.
\newblock Formes modulaires et fonctions z\^eta {$p$}-adiques.
\newblock In {\em Modular functions of one variable, {III} ({P}roc. {I}nternat.
  {S}ummer {S}chool, {U}niv. {A}ntwerp, {A}ntwerp, 1972)}, volume Vol. 350 of
  {\em Lecture Notes in Math.}, pages 191--268. Springer, Berlin-New York,
  1973.

\bibitem{Sun97}
Zhi-Hong Sun.
\newblock Congruences for {B}ernoulli numbers and {B}ernoulli polynomials.
\newblock {\em Discrete Math.}, 163(1-3):153--163, 1997.

\bibitem{Sun-Bernoulli}
Zhi-Hong Sun.
\newblock Congruences concerning {B}ernoulli numbers and {B}ernoulli
  polynomials.
\newblock {\em Discrete Appl. Math.}, 105(1-3):193--223, 2000.

\bibitem{SwD_ladic}
H.~P.~F. Swinnerton-Dyer.
\newblock On {$l$}-adic representations and congruences for coefficients of
  modular forms.
\newblock In {\em Modular functions of one variable, {III} ({P}roc. {I}nternat.
  {S}ummer {S}chool, {U}niv. {A}ntwerp, {A}ntwerp, 1972)}, volume Vol. 350 of
  {\em Lecture Notes in Math.}, pages 1--55. Springer, Berlin-New York, 1973.

\end{thebibliography}

\end{document}